\newtheorem{lemma}{Lemma}
\newtheorem{theorem}[lemma]{Theorem}
\newtheorem*{theorem*}{Theorem}
\newtheorem*{TheoremA}{Theorem A}
\newtheorem*{TheoremB}{Theorem B}
\newtheorem{corollary}[lemma]{Corollary}
\theoremstyle{definition}
\newtheorem{definition}[lemma]{Definition}
\newtheorem*{definition*}{Definition}
\newtheorem{example}[lemma]{Example}
\newcommand{\nOrd}{n\Ord}
\newcommand{\Ord}{\mathrm{Ord}}
\newcommand{\calC}{\mathcal{C}}
\newcommand{\calD}{\mathcal{D}}
\newcommand{\calP}{\mathcal{P}}
\newcommand{\RR}{\mathbb{R}}
\DeclareMathOperator{\hocolim}{\mathrm{hocolim}}
\newcommand{\conf}{\mathrm{Conf}}
\DeclareSymbolFont{cmlargesymbols}{OMX}{cmex}{m}{n}
\DeclareMathOperator{\colim}{\mathrm{colim}}
\title{Configuration spaces and $\Theta_n$}
\author{David Ayala}
\address{Department of Mathematics\\
Harvard University\\
One Oxford Street\\
Cambridge, MA 02138 USA}
\email{davidayala.math@gmail.com}
\author{Richard Hepworth}
\address{Institute of Mathematics\\
University of Aberdeen\\
Aberdeen AB24 3UE\\
United Kingdom}
\email{r.hepworth@abdn.ac.uk}
\thanks{Supported by the Danish National Research Foundation (DNRF) through the Centre for Symmetry and Deformation.
The first author was partially supported by ERC adv.grant no.228082, and by the National Science Foundation under Award No. 0902639}
\date{}
\subjclass[2000]{18D05, 55R80}
\begin{document}

\begin{abstract}
\noindent We demonstrate that Joyal's category $\Theta_n$, which is central to numerous definitions of $(\infty,n)$-categories, naturally encodes the homotopy type of configuration spaces of marked points in $\mathbb{R}^n$.
This article is largely self-contained and uses only elementary techniques in combinatorics and homotopy theory.
\end{abstract}

\maketitle

\section{Introduction}
\label{Introduction:Section}

Among the many approaches to a theory of $(\infty,1)$-categories,
two of the more developed are Joyal's theory of quasi-categories \cite{JoyalQuasi}
and Rezk's theory of \emph{complete Segal spaces} \cite{RezkCSS}.
In the former approach, an $(\infty,1)$-category is a contravariant functor
from the simplicial category $\Delta$ into sets,
while in the latter it is a contravariant functor from $\Delta$ into spaces;
in each case this functor is required to satisfy certain conditions.

Numerous deep and natural questions involving higher category theory, for instance the cobordism hypothesis of Baez and Dolan \cite{BaezDolan, LurieTFT}, require a developed theory of $(\infty,n)$-categories for $n\geqslant 0$.
It was in order to initiate such a theory of $(\infty,n)$-categories that Joyal introduced the categories $\Theta_n$ with $\Theta_1 = \Delta$.  Indeed, he defined an $(\infty,n)$-category to be a contravariant
functor from $\Theta_n$ into sets, directly generalising the notion of a quasi-category~\cite{JoyalDiscs}.
Thereafter, Rezk formulated a different notion of $(\infty,n)$-category as a contravariant functor from $\Theta_n$ into spaces, directly generalising the theory of complete Segal spaces~\cite{RezkCartesian}.

At present the categories $\Theta_n$ appear in several places in the higher category theory literature, and likewise admit various definitions.
We will use Berger's definition of $\Theta_n$ as the \emph{$n$-fold wreath product} of $\Delta$ with itself \cite{BergerTwo}.
This notion of wreath product is important in Lurie's theory of $\infty$-operads~\cite{LurieHA}.

The purpose of this paper is to demonstrate that the category $\Theta_n$
very naturally encodes properties of an important class of topological spaces, namely the spaces of configurations of $r$ marked points in Euclidean space $\RR^n$. Such configuration spaces arise in various situations throughout algebraic and geometric topology.
For one, when $n=2$ this configuration space is precisely the classifying space of the pure braid group on $r$ strands.
Secondly, keeping $r$ fixed and taking the limit
as $n\to\infty$ and forgetting the markings one obtains the classifying space of the symmetric group on
$r$ letters.
Lastly, the configuration space of $r$ marked points in
$\mathbb{R}^n$
is homotopy equivalent to the space of $r$-ary operations of the $E_n$-operad.

Let us introduce some terminology before stating our main result.
Fix a finite set $A$.
Recall that there is a natural assembly functor $\gamma_n\colon\Theta_n\to\Gamma$ taking values in Segal's category of finite sets.

\begin{definition*}
Let $\Theta_n(A)$ denote the following category.
The objects are pairs $(S,f)$ where $S$ is an object of $\Theta_n$ and $\sigma\colon\gamma_n(S)\to A$ is an isomorphism.
A morphism $(S,\sigma)\to (T,\tau)$
is a morphism $\lambda\colon S\to T$ in $\Theta_n$ for which $\tau\circ\gamma_n(\lambda)=\sigma$.
\end{definition*}

\begin{definition*}
Let $\conf_A(\RR^n)$ denote the space of all injective functions $A\to\RR^n$.
When $A=\{1,\ldots,r\}$ this is simply the space of configurations of $r$ marked points in $\RR^n$.
\end{definition*}

\begin{theorem*}
There is a homotopy equivalence
\[B(\Theta_n(A))\simeq \conf_A(\RR^n)\]
between the classifying space of $\Theta_n(A)$ and the configuration space $\conf_A(\RR^n)$.
\end{theorem*}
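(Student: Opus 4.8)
The plan is to realize $\conf_A(\RR^n)$ as the classifying space of a poset-like topological category built from ordered configurations, and then to exhibit a zig-zag of weak equivalences connecting that category to $\Theta_n(A)$ via an inductive decomposition along the wreath-product structure $\Theta_n = \Delta\wr\Theta_{n-1}$. First I would set up the base case $n=1$: here $\Theta_1 = \Delta$, the assembly functor $\gamma_1$ records the set of "vertices" of a linearly ordered set minus its endpoints (equivalently, $\gamma_1[k] = \{1,\dots,k\}$), and an object of $\Theta_1(A)$ is a linear order on $A$. Morphisms are order-preserving maps compatible with the labelling, which force the linear order to be preserved; one checks directly that $\Theta_1(A)$ is a groupoid — in fact a discrete groupoid — with one object for each linear ordering of $A$, so $B(\Theta_1(A))$ has one contractible component per linear order, i.e. $|A|!$ points. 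This matches $\conf_A(\RR^1)$, which deformation retracts onto the finite set of monotone injections $A\to\RR$, one per linear order. That handles $n=1$.

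For the inductive step I would use the decomposition of configuration space $\conf_A(\RR^n)\simeq \conf_A(\RR\times\RR^{n-1})$. Projecting to the first coordinate and then perturbing, every configuration is equivalent to one in which the $\RR$-coordinates are "generic"; stratifying by the induced linear (pre)order on the $\RR$-coordinate gives a cover of $\conf_A(\RR^n)$ whose pieces are indexed by surjections $p\colon A\twoheadrightarrow[k]$ (the fibers of $p$ being the clusters sharing a common first coordinate), with the piece over $p$ being homotopy equivalent to $\prod_{i=1}^k \conf_{p^{-1}(i)}(\RR^{n-1})$. This is exactly the combinatorics of the wreath product: an object of $\Theta_n = \Delta\wr\Theta_{n-1}$ over $A$ is a linear order $[k]$ together with a decomposition of $A$ into blocks $A_1,\dots,A_k$ and objects of $\Theta_{n-1}(A_i)$. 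I would package this as a homotopy colimit: $B(\Theta_n(A))$ is computed by a Grothendieck construction over the poset of such "generic linear stratifications," with the stratum over $(p, A_1,\dots,A_k)$ being $\prod_i B(\Theta_{n-1}(A_i))$, and I would produce a matching homotopy-colimit presentation of $\conf_A(\RR^n)$ using the same indexing category and a Mayer–Vietoris / open-cover argument (or, more cleanly, Dugger–Isaksen style hypercover descent). By the inductive hypothesis the two homotopy colimits have levelwise equivalent diagrams, hence equivalent homotopy colimits.

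To make the bookkeeping manageable I would introduce an intermediate category $\calD_n(A)$ — configurations together with a choice of generic linear stratification — with a forgetful functor $\calD_n(A)\to\conf_A(\RR^n)$ that is a weak equivalence because its fibers (the space of allowable stratifications of a fixed configuration) are contractible (convex, or at worst a nerve of a poset with initial object after a small perturbation). Simultaneously there is a functor $\calD_n(A)\to\Theta_n(A)$ remembering only the combinatorial type; Quillen's Theorem A applied to each of these, with the inductive hypothesis identifying the homotopy types of the comma-category fibers as products $\prod_i\conf_{A_i}(\RR^{n-1})\simeq\prod_i B(\Theta_{n-1}(A_i))$, yields that both functors are weak equivalences, and composing gives the desired equivalence $B(\Theta_n(A))\simeq\conf_A(\RR^n)$.

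The main obstacle, I expect, is not the homotopy-theoretic formalism but the careful identification of the combinatorial indexing category arising from the wreath product $\Delta\wr\Theta_{n-1}$ with the geometric poset of generic linear stratifications of $\RR\times\RR^{n-1}$ — in particular handling the non-strictness caused by points sharing a first coordinate (so that $\Delta$, not just its subcategory of injections, appears) and checking that the relevant comma categories have the claimed homotopy types uniformly in the labelling set, so that Theorem A applies. Equivalently phrased, the crux is showing that the combinatorial stratification poset is "homotopy cofinal" inside the geometric one; once that is established, the inductive assembly is routine.
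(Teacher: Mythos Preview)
Your strategy is genuinely different from the paper's. The paper does not induct on $n$ along the wreath product. Instead it introduces, all at once, the poset $\nOrd(A)$ of \emph{healthy} planar level trees of height $n$ with leaves labelled by $A$; this is the Fox--Neuwirth poset of $\conf_A(\RR^n)$. Theorem~A shows $B(\nOrd(A))\simeq\conf_A(\RR^n)$ by covering $\conf_A(\RR^n)$ with closed convex pieces $C(S)$ indexed by $\nOrd(A)$ and checking Reedy cofibrancy so that colimit and homotopy colimit agree. Theorem~B then shows that the inclusion $\nOrd(A)\hookrightarrow\Theta_n(A)$ has a left adjoint, the ``healthy subtree'' retraction $S\mapsto S^h$, hence is a homotopy equivalence. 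No induction, no Quillen Theorem~A, no hypercovers. What your approach would buy, if completed, is a proof that visibly tracks the wreath-product structure and hence the operadic picture; what the paper's approach buys is brevity and freedom from any analysis of how morphisms in $\Theta_n(A)$ fibre over $\Delta$.

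That analysis is exactly where your proposal has a gap rather than merely an acknowledged difficulty. You assert that $B(\Theta_n(A))$ arises as a Grothendieck construction over a poset of ordered partitions of $A$, with fibre $\prod_i B(\Theta_{n-1}(A_i))$. But the projection $[s](S_1,\ldots,S_s)\mapsto[s]$ from $\Theta_n(A)$ to $\Delta$ is not a (co)fibration with those fibres: a morphism in $\Theta_n(A)$ whose $\Delta$-component is non-identity redistributes the elements of a block $A_i$ among several target blocks $B_j$, and \emph{which} element lands in which $B_j$ is determined by the $\Theta_{n-1}$-components $f_{ij}$, not by the $\Delta$-component alone. So there is no base ``poset of ordered partitions'' over which $\Theta_n(A)$ sits with the fibres you name, and your Thomason/Theorem~A step has no diagram to apply to. Disentangling this is essentially the content of the paper's characterisation of active morphisms into healthy trees via the branching condition --- after which the shortcut through $\nOrd(A)$ is available and the induction is superfluous. (Separately, the intermediate topological category $\calD_n(A)$ is not defined precisely enough to assess; for a configuration with several coincident first coordinates the ``allowable stratifications'' form neither a convex set nor a poset with an initial object.)
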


It is our hope that this paper will be accessible to category theorists new to the configuration space $\conf_A(\RR^n)$, and also to topologists new to the category $\Theta_n$.
In particular we do not rely on any of the literature on
configuration spaces, and we do not assume any prior knowledge of the category $\Theta_n$.

Both $B(\Theta_n(A))$ and $\conf_A(\RR^n)$ admit evident free actions of the permutation group $\Sigma_A$, and the equivalence $B(\Theta_n(A))\simeq \conf_A(\RR^n)$ is in fact a $\Sigma_A$-equivariant homotopy equivalence.  It is our expectation that this equivalence extends to a more general statement in which the set $A$ is allowed to vary not just by bijections, but by surjections.  More concretely, we expect to show in future work that (a simplicial localisation of) $\Theta_n$ is equivalent to (the exit-path category of a non-compact version of) the Ran space of $\RR^n$.
A consequence of such a result would be an explicit comparison between $\Theta_n$-spaces (that is, contravariant functors from $\Theta_n$ to spaces) and $E_n$-algebras  which would make use of `factorisation algebras' in the sense of Lurie~(\cite{LurieHA}).
Such a comparison is to be expected.  For instance, Berger (\cite{BergerTwo}) has shown that group-like reduced $\Theta_n$-spaces are a `model' for $n$-fold loop spaces.  
\\

Let us sketch the proof of the theorem.
We will make use of an elementary combinatorial object which we call the \emph{poset of $n$-orderings of $A$} and  write as $\nOrd(A)$.
The elements of $\nOrd(A)$ are certain trees of height $n$ with leaves
labelled by $A$, and the partial order
is determined by a simple criterion that we call the \emph{branching condition}.
When $n=1$ the poset $\nOrd(A)$ is simply the set of linear orderings of $A$,
and the partial ordering is the trivial one.
The poset of $n$ orderings appears elsewhere in other other guises:
in \cite{Batanin}
it is the poset of total complementary $n$-orders on $A$,
while in \cite{BFSV} it is the poset of $|A|$-ary operations
in the Milgram preoperad.
The relevance of $\nOrd(A)$ is that it mediates between $\Theta_n(A)$ and $\conf_A(\RR^n)$:

\begin{TheoremA}
There is a homotopy equivalence $B(\nOrd(A))\simeq \conf_A(\RR^n)$.
\end{TheoremA}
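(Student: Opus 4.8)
The plan is to exhibit $\conf_A(\RR^n)$ as a space stratified over the poset $\nOrd(A)$, and then to identify it with $B(\nOrd(A))$ by verifying that this stratification is as simple as possible, in the sense that its ``open stars'' are contractible. Everything in the construction will be natural in $A$, so the resulting equivalence will automatically be $\Sigma_A$-equivariant.

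\emph{The stratification.} Given $x\in\conf_A(\RR^n)$ and distinct $a,b\in A$, let $k(a,b)\in\{1,\dots,n\}$ be the least coordinate in which $x(a)$ and $x(b)$ differ, and record whether the $k(a,b)$-th coordinate of $x(a)$ is less than or greater than that of $x(b)$. First I would check that this data, ranging over all pairs, assembles into a height-$n$ tree with leaves labelled by $A$ — the internal vertices recording the heights at which points become ``separated'' — and that it satisfies the branching condition, so that it defines an element $\rho(x)\in\nOrd(A)$. Giving $\nOrd(A)$ the Alexandrov topology, whose open sets are the up-sets, the function $\rho\colon\conf_A(\RR^n)\to\nOrd(A)$ is continuous: a sufficiently small perturbation of $x$ can only turn equalities between coordinates into strict inequalities, and so can only move $\rho(x)$ upward. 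For $T\in\nOrd(A)$ write $U_T=\{S\in\nOrd(A):S\geqslant T\}$ for the smallest open set containing $T$; the $U_T$ form a basis for the topology, and $\rho^{-1}(U_T)$ is the open subspace $X_T\subseteq\conf_A(\RR^n)$ of configurations whose associated tree refines $T$.

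\emph{Contractibility of the open stars.} The heart of the argument is the claim that each $X_T$ is contractible, which I would prove by induction on $n$. When $n=1$ the poset $\nOrd(A)$ is discrete, so $X_T$ is the set of configurations in $\RR$ in a prescribed linear order, an open convex cone, hence contractible. For the inductive step I would consider the map $q\colon X_T\to\RR^A$ recording the first coordinate of each point. Its image is the open convex cone $Q_T$ of tuples respecting the order in which $T$ separates points at height $1$, and over each face of the evident stratification of $Q_T$ the map $q$ is a trivial bundle whose fibre is a product — over the height-$1$ blocks of $T$ and the sub-blocks cut out by the chosen face — of spaces of the same form $X_{T'}$, now for the height-$(n-1)$ trees obtained by restricting the subtrees of $T$. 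By the inductive hypothesis these fibres are contractible, so $q$ is a stratified bundle with contractible fibres; such a map is a weak homotopy equivalence, and since $Q_T$ is convex it follows that $X_T$ is contractible.

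\emph{Conclusion.} Finally I would invoke McCord's theorem on Alexandrov spaces: the $U_T$ form a basis for $\nOrd(A)$, each $U_T$ is weakly contractible (it has a least element), and each restriction $\rho|\colon X_T=\rho^{-1}(U_T)\to U_T$ is a map between weakly contractible spaces, hence a weak equivalence. McCord's criterion then shows that $\rho$ is a weak homotopy equivalence, and combining this with McCord's weak equivalence between the Alexandrov space of a poset and that poset's classifying space produces a chain of weak equivalences between $\conf_A(\RR^n)$ and $B(\nOrd(A))$; since both spaces are CW complexes, Whitehead's theorem upgrades this to the desired homotopy equivalence.

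\emph{The main obstacle.} I expect the crux to be the inductive step in the contractibility claim. It requires, first, the combinatorial bookkeeping identifying the fibres of $q$ with products of lower-dimensional open stars, and second, the point-set fact that a stratified bundle with weakly contractible fibres is a weak equivalence — so that a contractible base forces a contractible total space. A seemingly cleaner route to the final statement would be to present $\conf_A(\RR^n)$ directly as the homotopy colimit $\hocolim$ over $\nOrd(A)^{\mathrm{op}}$ of the functor $T\mapsto X_T$, which is automatically equivalent to $B(\nOrd(A))$ once the $X_T$ are known to be contractible; but verifying that this homotopy colimit recovers $\conf_A(\RR^n)$ — essentially, that the open cover $\{X_T\}$ is faithfully modelled by the poset — meets the same difficulty, since a finite intersection $X_{T}\cap X_{T'}$ need not itself be of the form $X_{T''}$.
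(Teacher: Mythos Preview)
Your strategy---stratify $\conf_A(\RR^n)$ over $\nOrd(A)$ via $\rho$, show the open stars $X_T=\rho^{-1}(U_T)$ are contractible, and invoke McCord---is a legitimate route and genuinely different from the paper's.  The paper works instead with the \emph{closed} pieces
\[
C(T)=\{\phi:\text{there is a morphism }S_\phi\to T\text{ in }\nOrd(A)\},
\]
dual to your open stars $X_T=\{\phi:\text{there is a morphism }T\to S_\phi\}$.  It assembles these into a covariant diagram $C\colon\nOrd(A)\to\mathrm{Top}$, identifies $\colim C\cong\conf_A(\RR^n)$, checks Reedy cofibrancy of $C$ to obtain $\hocolim C\simeq\colim C$, and then uses contractibility of each $C(T)$ to pass to $B(\nOrd(A))$.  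The payoff of that choice is that each $C(T)$ is \emph{convex} in $(\RR^n)^A$, so its contractibility is a one-line observation and the work shifts to the routine latching-object check.  In your approach the McCord step is cheap, but contractibility of $X_T$ is not: these open stars are not convex, and already for $n=2$, $|A|=2$ one is looking at a plane with a ray removed.

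Your proposed induction for that contractibility is where there is a real gap.  The first-coordinate projection $q\colon X_T\to Q_T$ is not a fibration: as first coordinates within a block collide, new constraints on the remaining coordinates appear and the fibre shrinks.  You correctly observe that over each stratum of $Q_T$ the map is a trivial bundle with contractible fibre, but the blanket assertion that such a ``stratified bundle'' over a contractible base has contractible total space is not a standard lemma, and it fails without hypotheses on how the strata glue (a disjoint union of two contractible pieces mapping to $[0,1)$ and $\{1\}$ respectively already gives a counterexample).  Making the step honest in this situation would mean running another McCord-type or hocolim argument for $q$, now over the poset of ordered partitions refining the level-$1$ blocks of $T$, nested inside the induction on $n$---at least as much work as the paper's Reedy argument, and less transparent.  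So the obstacle you flag is genuine and your proposal does not yet supply the idea that overcomes it; the paper's passage to closed convex cells sidesteps it entirely.
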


\begin{TheoremB}
There is a full embedding $\nOrd(A)\hookrightarrow \Theta_n(A)$ which induces a homotopy equivalence on geometric realisations.
\end{TheoremB}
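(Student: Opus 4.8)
The plan is to prove Theorem~B in two stages. First I would construct the embedding by induction on $n$ using the description of $\Theta_n$ as the $n$-fold wreath product, $\Theta_n=\Delta\wr\Theta_{n-1}$. Then I would deduce the homotopy equivalence by exhibiting $\nOrd(A)$ as a reflective subcategory of $\Theta_n(A)$.

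Recall that an object of $\Delta\wr\Theta_{n-1}$ is an object $[m]$ of $\Delta$ together with objects $S_1,\dots,S_m$ of $\Theta_{n-1}$, and that $\gamma_n$ carries it to the disjoint union $\gamma_{n-1}(S_1)\sqcup\dots\sqcup\gamma_{n-1}(S_m)$. An $n$-ordering $T$ of $A$ unwinds, through its top branching, into an ordered decomposition $A=A_1\sqcup\dots\sqcup A_m$ into nonempty subsets together with $(n-1)$-orderings $T_i$ of the pieces $A_i$; I would define $\iota_A(T)=\bigl([m];(\iota_{A_1}(T_1),\dots,\iota_{A_m}(T_m))\bigr)$, equipped with the isomorphism $\gamma_n(\iota_A(T))\cong A$ assembled from the pieces, the base case $n=1$ being the evident identification of a linear ordering of $A$ with the object $[r]$ (where $r=|A|$) carrying that ordering. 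A relation $T\leqslant T'$ dictated by the branching condition unwinds in a compatible fashion and, once canonical choices of the wreath-product morphism data have been made and checked to compose, determines a morphism $\iota_A(T)\to\iota_A(T')$. That $\iota_A$ is injective on objects is immediate, and faithfulness is automatic since $\nOrd(A)$ is a poset; the substance of this stage is \emph{fullness}, namely that every morphism $\iota_A(T)\to\iota_A(T')$ in $\Theta_n(A)$ forces $T\leqslant T'$. I would prove this by induction on $n$ by inspecting the components of a wreath-product morphism; the key leverage is that, because the source and target markings are \emph{bijections}, the induced map on $\gamma_n$ is forced to be a bijection, which sharply constrains the possibilities.

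For the homotopy equivalence I would apply Quillen's Theorem~A to $\iota_A\colon\nOrd(A)\to\Theta_n(A)$, which reduces the problem to showing that the comma category $(S,\sigma)\!\downarrow\!\iota_A$ is contractible for every object $(S,\sigma)$ of $\Theta_n(A)$. Concretely, I would prove the key lemma that each $(S,\sigma)$ admits a \emph{canonical resolution}: an $n$-ordering $\widehat S$ together with a morphism $\rho_S\colon(S,\sigma)\to\iota_A(\widehat S)$ through which every morphism $(S,\sigma)\to\iota_A(T)$ factors uniquely. This says exactly that $(\widehat S,\rho_S)$ is an initial object of $(S,\sigma)\!\downarrow\!\iota_A$ --- so that comma category is contractible --- and, equivalently, that $\widehat{(-)}$ is a left adjoint of $\iota_A$, realising $\nOrd(A)$ as a reflective subcategory. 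The lemma has two ingredients. For the existence of $\widehat S$: any object $S$ of $\Theta_n$ with $|\gamma_n(S)|=|A|$ can be canonically left-aligned and recursively resolved into a height-$n$ tree whose leaves are the elements of $A$, and this resolution comes with the morphism $\rho_S$. For the unique factorisation: I would again argue by induction on $n$, the decisive point being that a morphism from an arbitrary object of $\Theta_n$ \emph{into} an $n$-ordering, whose effect on $\gamma_n$ is the forced bijection, is unique whenever it exists, since the maximal height of the target tree leaves no freedom in the underlying simplicial operator, nor --- inductively --- in the morphisms between blocks. Granting the lemma, $B(\iota_A)\colon B(\nOrd(A))\to B(\Theta_n(A))$ is a homotopy equivalence, either directly from Quillen's Theorem~A or from the fact that the inclusion of a reflective subcategory induces a homotopy equivalence of classifying spaces.

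The main obstacle is the wreath-product bookkeeping underlying the two rigidity statements --- fullness of $\iota_A$, and uniqueness of morphisms into an $n$-ordering --- together with singling out the canonical resolution $\widehat S$ and verifying its universal property. Once the morphism calculus of $\Delta\wr\Theta_{n-1}$ has been set up carefully, each of these reduces to a finite, if somewhat intricate, induction on $n$, with the case $n=1$ essentially trivial since there $\nOrd(A)$ already coincides with $\Theta_1(A)=\Delta(A)$.
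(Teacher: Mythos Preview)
Your proposal is correct and follows the same overall arc as the paper: exhibit $\nOrd(A)$ as a reflective full subcategory of $\Theta_n(A)$, so that the inclusion induces a homotopy equivalence on classifying spaces. The differences are organisational rather than substantive. The paper first isolates a single standalone theorem characterising \emph{all} active morphisms $S\to T$ in $\Theta_n$ with $T$ healthy: such morphisms are in bijection, under $\gamma_n$, with active morphisms in $\Gamma$ satisfying the branching condition. Fullness of the embedding, uniqueness of morphisms into healthy targets, and the universal property of the reflector then drop out as immediate corollaries, rather than requiring the several separate inductions on $n$ that you outline. The paper also names the reflector concretely --- it is simply the healthy subtree $S^h\subset S$ spanned by the level-$n$ leaves, with the evident labelling --- rather than describing it as a recursive ``resolution'', and it reads off the homotopy equivalence directly from $r\circ i=1$ together with the natural transformation $1\Rightarrow i\circ r$, without invoking Quillen's Theorem~A.
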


These two theorems together imply the main result above.

Theorem~A is related to the \emph{Fox-Neuwirth cell decomposition} of $\conf_A(\RR^n)$.
Fox and Neuwirth exhibited in~\cite{FoxNeuwirth} a decomposition of $\conf_A(\RR^2)$ into finitely many open cells.
This is not a CW-decomposition, but still the topological boundary of each cell meets only cells of lower dimension.
Consequently the cells themselves form the elements of a poset.
The generalisation to $n>2$ is discussed 
in \cite[Section~5.4]{GetzlerJones},
\cite[section~6]{Batanin}
and \cite{GiustiSinha}.
From this point of view, Theorem~A identifies the poset of Fox-Neuwirth cells with $\nOrd(A)$, and shows that its realisation is homotopy-equivalent to $\conf_A(\RR^n)$ itself.

Theorem~A is contained in a theorem of
Balteanu, Fiedorowicz, Schw\"anzl and Vogt \cite[Theorem~3.14]{BFSV}.
However, we present our own proof of the result, both in
order to give a self-contained account of our main theorem, and because
our proof is significantly simpler.
(This is not surprising: Theorem~A appears in \cite{BFSV} as just one part
of a more elaborate result.)

Theorem~B amounts to a careful study of the morphisms of $\Theta_n$.
For it is well-known that the objects of $\Theta_n$ admit a simple description as
the \emph{planar level trees of height $n$}.
Thus it is simple to construct the claimed embedding
$\nOrd(A)\hookrightarrow\Theta_n(A)$ on the level of objects.
However, given objects $S$ and $T$ of $\Theta_n$, described as planar level trees,
it is difficult to describe the collection of all morphisms $S\to T$ in a
similarly combinatorial way.
(This can be seen as one of the causes for the profusion of definitions
of $\Theta_n$ itself.)
Nevertheless,
we are able to prove a theorem that gives a simple combinatorial description of the set of all \emph{active} morphisms $S\to T$ when $T$ is \emph{healthy}.
(Here \emph{active} and \emph{healthy} are appropriate restricted classes
of morphisms and objects.)
This is sufficient to prove Theorem~B.

The paper is arranged as follows.
In section~\ref{nOrd:Section} we introduce the poset $\nOrd(A)$,
then in section~\ref{nOrdHomotopy:Section} we compute its homotopy type
and prove Theorem~A.
Section~\ref{Definitions:Section} recalls the definition of $\Theta_n$ in detail.
Then section~\ref{Morphisms:Section} proves our characterisation of the
active morphisms $S\to T$ when $T$ is healthy.
This result is then used in section~\ref{ThetanA:Section} to prove Theorem~B.

\subsection*{Acknowledgments}

This work began in the Topology Reading Seminar in
the mathematics department of Copenhagen University.
Together with the other members of the topology group, we spent several weeks
in 2010 studying Clemens Berger's papers~\cite{BergerOne}
and~\cite{BergerTwo}, which inspired the results presented here.
We would like to thank Berger for his work, and we would like to thank
the other members of Copenhagen's topology group for their friendship
and support.

\section{The poset of $n$-orderings on $A$}\label{nOrd:Section}

As in the introduction, we fix a finite set $A$ and an integer $n\geqslant 1$.
This section will first define the notion of an \emph{$n$-ordering} on $A$, and then make the set of $n$-orderings on $A$ into a poset.
(For us, a \emph{poset} is a category in which for each pair of objects $c,d$ there is at most one morphism $c\to d$, and in which the only isomorphisms are the identity morphisms.)
As explained in the introduction, this poset appears elsewhere in the literature, in particular in \cite{Batanin} and \cite{BFSV}.
Here we will introduce the poset from scratch.

\begin{definition}\label{PlanarLevelTrees:Definition}
A \emph{level tree} is a tree equipped with a preferred vertex called the \emph{root}.  The root gives a preferred direction to all edges of the tree, and there is a unique directed path from any vertex to the root.  
The \emph{incoming} edges at a vertex are those edges directed toward the vertex.
A \emph{planar level tree} is a level tree equipped with a linear ordering on the {incoming} edges at each vertex.  
The \emph{level} of a vertex is the length of the directed path from that vertex to the root.
A tree has \emph{height $n$} if the maximum of the levels of its vertices is at most $n$.  Notice then that a tree of height $n$ is a tree of height $n+k$ for any $k\geqslant 0$.
A vertex $v$ is called a \textit{leaf} if it has no incoming edges.
We depict planar level trees as a diagrams as follows,
with the root at the bottom and with the linear orderings read from left to right.
\[
\xy
(0,0)*{\bullet}="A";
(-5,5)*{\bullet}="B1";
(-0,5)*{\bullet}="B2";
(5,5)*{\bullet}="B4";
(-10,10)*{\bullet}="C1";
(-5,10)*{\bullet}="C2";
(0,10)*{\bullet}="C3";
(5,10)*{\bullet}="C4";
(10,10)*{\bullet}="C6";
"A";"B1" **\dir{-};
"A";"B2" **\dir{-};
"A";"B4" **\dir{-};
"B1";"C1" **\dir{-};
"B1";"C2" **\dir{-};
"B1";"C2" **\dir{-};
"B2";"C3" **\dir{-};
"B2";"C4" **\dir{-};
"B4";"C6" **\dir{-};
\endxy
\quad\quad
\xy
(0,0)*{\bullet}="A";
(-15,5)*{\bullet}="B1";
(-5,5)*{\bullet}="B2";
(5,5)*{\bullet}="B3";
(15,5)*{\bullet}="B4";
(-10,10)*{\bullet}="C3";
(-5,10)*{\bullet}="C4";
(0,10)*{\bullet}="C5";
(15,10)*{\bullet}="C6";
"A";"B1" **\dir{-};
"A";"B2" **\dir{-};
"A";"B3" **\dir{-};
"A";"B4" **\dir{-};
"B2";"C3" **\dir{-};
"B2";"C4" **\dir{-};
"B2";"C5" **\dir{-};
"B4";"C6" **\dir{-};
\endxy
\] 
\end{definition}

\begin{definition}
A planar level tree of height $n$ is \emph{healthy} if it has no leaves at levels $1,\ldots,n-1$.  In the illustration above the first tree is healthy of height $2$, whereas the second tree is not healthy.
\end{definition}

Note that the question of whether a planar level tree of height $n$ is healthy is dependent on $n$.
The tree of height $0$, which consists of the root and nothing more, is healthy regardless of the value of $n$.
This might seem anomalous, but it will allow the empty set to admit a (unique) $n$-ordering for each $n$, as we see now.

\begin{definition}\label{nOrderings:Definition}
An \emph{$n$-ordering} on $A$ is a pair $(S,\sigma)$ consisting of a healthy planar level tree $S$ of height $n$, together with a bijection $\sigma$ between $A$ and the level-$n$ leaves of $S$.
We will usually denote an $n$-ordering $(S,\sigma)$ by $S$ alone, leaving the bijection $\sigma$ implicit.
\end{definition}

\begin{example}\label{nOrderings:ExampleOne}
A $1$-ordering on $A$ is precisely a linear ordering on $A$.
\end{example}

\begin{example}\label{nOrderings:ExampleTwo}
There are exactly {four} $2$-orderings on the set $A=\{a,b\}$, and they are depicted below.
\[\xy
(-60,0)*{\bullet}="T1";
(-60,-5)*{S};
(-65,5)*{\bullet}="T2";
(-55,5)*{\bullet}="T3";
(-65,10)*{\bullet}="T4";
(-65,13)*{a};
(-55,10)*{\bullet}="T5";
(-55,13.5)*{b};
"T1";"T2" **\dir{-};
"T1";"T3" **\dir{-};
"T2";"T4" **\dir{-};
"T3";"T5" **\dir{-};
(-30,0)*{\bullet}="T1";
(-30,-5)*{T};
(-35,5)*{\bullet}="T2";
(-25,5)*{\bullet}="T3";
(-35,10)*{\bullet}="T4";
(-35,13.5)*{b};
(-25,10)*{\bullet}="T5";
(-25,13)*{a};
"T1";"T2" **\dir{-};
"T1";"T3" **\dir{-};
"T2";"T4" **\dir{-};
"T3";"T5" **\dir{-};
(0,0)*{\bullet}="T1";
(0,-5)*{U};
(0,5)*{\bullet}="T2";
(-5,10)*{\bullet}="T3";
(-5,13)*{a};
(5,10)*{\bullet}="T4";
(5,13.5)*{b};
"T1";"T2" **\dir{-};
"T2";"T3" **\dir{-};
"T2";"T4" **\dir{-};
(30,0)*{\bullet}="T1";
(30,-5)*{V};
(30,5)*{\bullet}="T2";
(25,10)*{\bullet}="T3";
(25,13.5)*{b};
(35,10)*{\bullet}="T4";
(35,13)*{a};
"T1";"T2" **\dir{-};
"T2";"T3" **\dir{-};
"T2";"T4" **\dir{-};
\endxy\]
\end{example}

Next, we wish to define a notion of \emph{morphism} between different $n$-orderings.
In order to do so we introduce a little more notation.

\begin{definition}\label{OrderBranchingLevels:DefinitionOne}
Let $S$ be an $n$-ordering on $A$.
The leaves of $S$ inherit a canonical linear order, and this induces a linear ordering on $A$ that we denote by $<_S$.
Given $a,b\in A$, the \emph{branching level}
\[b_S(a,b)\]
is defined to be the level of the vertex at which the directed paths from $a$ and $b$ to the root first meet. 
\end{definition}

\begin{example}\label{nOrderings:ExampleThree}
Let us return to the four elements of $2\Ord(\{a,b\})$, which were listed in Example~\ref{nOrderings:ExampleTwo}.
For the orderings we have
\[a<_S b,\qquad b<_T a,\qquad a<_U b,\qquad b<_V a\]
and for the branching levels we have
\[b_S(a,b)=0,\qquad b_T(a,b)=0,\qquad b_U(a,b)=1,\qquad b_V(a,b)=1.\]
\end{example}

\begin{definition}\label{BranchingCondition:DefinitionOne}
Let $S$ and $T$ be $n$-orderings on $A$.
The \emph{branching condition} for a morphism $S\to T$ states that the following criterion holds for all $a,b\in A$.
\begin{quotation}
We have $b_T(a,b)\leqslant b_S(a,b)$, with equality only if the ordering of $a$ and $b$ under $<_T$ agrees with the ordering of $a$ and $b$ under $<_S$.
\end{quotation}
The condition on orderings means that ($a<_T b$ and $a<_S b$) or ($b<_T a$ and $b<_S a$).
\end{definition}

\begin{definition}\label{nOrd:Definition}
The \emph{poset of $n$-orderings on $A$}, denoted $\nOrd(A)$, is the poset whose objects are the $n$-orderings on $A$, and in which there is a morphism $S\to T$ if and only if the branching condition holds.
\end{definition}

It is trivial to verify that $\nOrd(A)$ is indeed a poset.  In other words
\begin{itemize}
\item for every $n$-ordering $S$ there is a morphism $S\to S$,
\item if there are morphisms $S\to T$ and $T\to U$, then there is a morphism $S\to U$,
\item if there are morphisms $S\to T$ and $T\to S$ then $S=T$.
\end{itemize}

\begin{example}\label{nOrderings:ExampleFour}
Let us return again to $2\Ord(\{a,b\})$, as in Examples~\ref{nOrderings:ExampleTwo} and \ref{nOrderings:ExampleThree}.
There are exactly four objects, and there are also exactly four non-identity morphisms, which we depict below.
\[\xy
(0,20)*{\bullet}="T1";
(0,15)*{U};
(0,25)*{\bullet}="T2";
(-5,30)*{\bullet}="T3";
(-5,33)*{a};
(5,30)*{\bullet}="T4";
(5,33.5)*{b};
"T1";"T2" **\dir{-};
"T2";"T3" **\dir{-};
"T2";"T4" **\dir{-};
(0,-20)*{\bullet}="T1";
(0,-25)*{V};
(0,-15)*{\bullet}="T2";
(-5,-10)*{\bullet}="T3";
(-5,-6.5)*{b};
(5,-10)*{\bullet}="T4";
(5,-7)*{a};
"T1";"T2" **\dir{-};
"T2";"T3" **\dir{-};
"T2";"T4" **\dir{-};
(-30,0)*{\bullet}="T1";
(-30,-5)*{S};
(-35,5)*{\bullet}="T2";
(-25,5)*{\bullet}="T3";
(-35,10)*{\bullet}="T4";
(-35,13)*{a};
(-25,10)*{\bullet}="T5";
(-25,13.5)*{b};
"T1";"T2" **\dir{-};
"T1";"T3" **\dir{-};
"T2";"T4" **\dir{-};
"T3";"T5" **\dir{-};
(30,0)*{\bullet}="T1";
(30,-5)*{T};
(25,5)*{\bullet}="T2";
(35,5)*{\bullet}="T3";
(25,10)*{\bullet}="T4";
(25,13.5)*{b};
(35,10)*{\bullet}="T5";
(35,13)*{a};
"T1";"T2" **\dir{-};
"T1";"T3" **\dir{-};
"T2";"T4" **\dir{-};
"T3";"T5" **\dir{-};
{\ar@/_1 ex/(-5,25)*{};(-20,10)*{}};
{\ar@/^1 ex/(5,25)*{};(20,10)*{}};
{\ar@/^1 ex/(-5,-15)*{};(-20,0)*{}};
{\ar@/_1 ex/(5,-15)*{};(20,0)*{}};
\endxy\]
For example there is a morphism $U\to T$ since $b_T(a,b)<b_U(a,b)$, and there is no morphism $U\to V$ since $b_U(a,b)=b_V(a,b)$ while $a<_Ub$ and $a>_V b$.

Observe that the classifying space $B(2\Ord(\{a,b\})$ is homeomorphic to $S^1$, which is homotopy equivalent to the configuration space $\conf_2(A)$ of two labelled points in $\mathbb{R}^2$.
\end{example}

\section{The homotopy type of $\nOrd(A)$}
\label{nOrdHomotopy:Section}

Now we turn to the proof of Theorem~A, which states that there is a homotopy equivalence
$B(\nOrd(A))\simeq \conf_A(\RR^n)$.
As explained in the introduction, the key to the proof is
that $\nOrd(A)$ is the poset indexing the `cells' in the Fox-Neuwirth 
decomposition of $\conf_A(\RR^n)$ \cite{FoxNeuwirth,GetzlerJones,Batanin,GiustiSinha},
and the theorem itself is contained in \cite[Theorem~3.14]{BFSV}.
The proof we give here does not depend on any of this literature.

\begin{definition}\label{Cells:Definition}
Let $S$ be an object of $\nOrd(A)$.
Define $C(S)\subset\conf_A(\RR^n)$ to be the space of injections $\phi\colon A\hookrightarrow\mathbb{R}^n$
such that for each pair $a,b\in A$ with $a<_S b$, we have
\begin{enumerate}
\item
$\phi(a)_i=\phi(b)_i$ for $i=1,\ldots,b_S(a,b)$;
\item
\label{Second}
$\phi(a)_i\leqslant \phi(b)_i$ for $i=b_S(a,b)+1$.
\end{enumerate}
\end{definition}

Inspecting the branching condition, observe that $S\to T$ implies $C(S) \subset C(T)$.  In this way, the assignment $S\mapsto C(S)$ defines a functor
\[
C\colon \nOrd(A) \to \mathrm{Top}.
\]

\begin{lemma}\label{Sphi:Lemma}
Let $\phi\in\conf_A(\RR^n)$.
Then there is an object $S_\phi$ in $\nOrd(A)$ with the property that $\phi\in C(S)$ if and only if there is a morphism $S_\phi\to S$.
\end{lemma}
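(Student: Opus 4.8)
The plan is to build the $n$-ordering $S_\phi$ explicitly out of $\phi$ by repeatedly partitioning $A$ along the coordinate axes of $\RR^n$ --- so that $S_\phi$ is precisely the Fox--Neuwirth cell containing $\phi$ --- and then to match the two defining inequalities of $C(S)$ against the branching condition of Definition~\ref{BranchingCondition:DefinitionOne}.

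First I would construct $S_\phi$. For $0\leqslant j\leqslant n$ let $\sim_j$ denote the equivalence relation on $A$ with $a\sim_j b$ if and only if $\phi(a)_i=\phi(b)_i$ for all $i\leqslant j$; since $\phi$ is injective, $\sim_0$ is the trivial relation (one class, namely $A$) and $\sim_n$ is equality. I take the level-$j$ vertices of $S_\phi$ to be the $\sim_j$-classes, with an edge running from each $\sim_j$-class to the unique $\sim_{j-1}$-class containing it; the root is the single $\sim_0$-class $A$, or the trivial tree of height $0$ when $A=\emptyset$. At a level-$(j-1)$ vertex $B$ the incoming edges are indexed by the $\sim_j$-classes contained in $B$, each of which has a well-defined value of the $j$-th coordinate, and distinct ones have distinct such values; I order the incoming edges by this real number, which makes $S_\phi$ a planar level tree. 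Every level-$j$ vertex with $j<n$ is a nonempty $\sim_j$-class and so has at least one incoming edge, and there are no vertices above level $n$; hence $S_\phi$ is healthy of height $n$, and together with the bijection $a\mapsto\{a\}$ onto its level-$n$ leaves this exhibits $S_\phi$ as an object of $\nOrd(A)$. Establishing that $S_\phi$ genuinely is such an object --- healthiness, and the bijectivity of $a\mapsto\{a\}$, both of which use injectivity of $\phi$ --- is the main thing to get right; the rest is bookkeeping.

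Next I would read off the combinatorial invariants of $S_\phi$. For distinct $a,b$ the directed paths to the root first meet at the vertex $[a]_k=[b]_k$, where $k$ is largest with $a\sim_k b$, so that $b_{S_\phi}(a,b)=\max\{\,j:\phi(a)_i=\phi(b)_i\text{ for all }i\leqslant j\,\}$; writing $k$ for this number we have $\phi(a)_{k+1}\neq\phi(b)_{k+1}$ and $k\leqslant n-1$. At that branching vertex the planar order compares the $(k+1)$-st coordinates of the two children, so $a<_{S_\phi}b$ if and only if $\phi(a)_{k+1}<\phi(b)_{k+1}$. Consequently, whenever $a<_{S_\phi}b$ we have $\phi(a)_i=\phi(b)_i$ for $i\leqslant k$ and $\phi(a)_{k+1}<\phi(b)_{k+1}$, which is exactly what conditions (1) and (2) of Definition~\ref{Cells:Definition} require; hence $\phi\in C(S_\phi)$.

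Finally I would prove the equivalence. For ($\Leftarrow$): a morphism $S_\phi\to S$ gives $C(S_\phi)\subseteq C(S)$ by the functoriality remark after Definition~\ref{Cells:Definition}, and $\phi\in C(S_\phi)$, so $\phi\in C(S)$. For ($\Rightarrow$): assume $\phi\in C(S)$ and verify the branching condition for a morphism $S_\phi\to S$. Fix distinct $a,b$ and relabel so that $a<_S b$, and set $\ell=b_S(a,b)$, $k=b_{S_\phi}(a,b)$. Membership $\phi\in C(S)$ gives $\phi(a)_i=\phi(b)_i$ for $i\leqslant\ell$ and $\phi(a)_{\ell+1}\leqslant\phi(b)_{\ell+1}$; the equalities together with the description of $k$ force $\ell\leqslant k$, i.e.\ $b_S(a,b)\leqslant b_{S_\phi}(a,b)$. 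If in addition $\ell=k$, then $\phi(a)_{k+1}\leqslant\phi(b)_{k+1}$ while $\phi(a)_{k+1}\neq\phi(b)_{k+1}$, whence $\phi(a)_{k+1}<\phi(b)_{k+1}$ and so $a<_{S_\phi}b$; thus $<_S$ and $<_{S_\phi}$ agree on $\{a,b\}$. So the branching condition holds, there is a morphism $S_\phi\to S$, and the lemma follows.
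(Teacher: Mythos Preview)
Your proof is correct and follows essentially the same approach as the paper's: the paper defines $S_\phi$ by specifying its induced ordering (the lexicographic one on $\phi(A)$) and its branching levels $b(a,b)$, then declares the verification ``trivial to check,'' whereas you build the same tree explicitly via the chain of equivalence relations $\sim_j$ and carry out that verification in full. The construction and the logic of the two directions are identical; you have simply supplied the details the paper omits.
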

\begin{proof}
The lexicographic ordering on $\phi(A)\subset\mathbb{R}^n$ induces an  ordering on $A$ itself that we denote by $<$.
For $a,b\in A$ we define $b(a,b)$ to be the largest integer $i$ for which $\phi(a)_i=\phi(b)_i$.
There is a unique object $S_\phi$ in $\nOrd(A)$ with ordering $<$ and branching levels $b(a,b)$.
It is now trivial to check that $S_\phi$ has the required property.
\end{proof}

\begin{lemma}
\label{Colimit:Lemma}
The colimit of $C\colon\nOrd(A)\to\mathrm{Top}$ is homeomorphic to $\conf_A(\RR^n)$.
\end{lemma}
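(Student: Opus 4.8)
The plan is to show that the evident continuous map $\colim_S C(S)\to\conf_A(\RR^n)$, assembled from the inclusions $C(S)\hookrightarrow\conf_A(\RR^n)$, is a homeomorphism. Since each $C(S)$ is a subspace of $\conf_A(\RR^n)$ and the colimit is taken in $\mathrm{Top}$, there is a canonical continuous comparison map; the work is to produce a continuous inverse, or equivalently to check that the map is a bijection and is open.

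First I would establish surjectivity and injectivity on points. Given $\phi\in\conf_A(\RR^n)$, Lemma~\ref{Sphi:Lemma} produces $S_\phi$ with $\phi\in C(S_\phi)$, which gives surjectivity. For injectivity one must understand when two points of the colimit are identified: a point of $\colim_S C(S)$ is represented by a pair $(S,\phi)$ with $\phi\in C(S)$, and two such pairs $(S,\phi)$ and $(T,\psi)$ are identified precisely when $\phi=\psi$ and $S,T$ are connected by a zig-zag of morphisms over which $\phi$ lies in every intermediate $C(\cdot)$. So I would argue that for a fixed $\phi$, the full subposet of $\nOrd(A)$ on those $S$ with $\phi\in C(S)$ has an initial object, namely $S_\phi$: this is exactly the content of Lemma~\ref{Sphi:Lemma}, since $\phi\in C(S)$ iff there is a morphism $S_\phi\to S$, so the subposet is $(S_\phi\downarrow \nOrd(A))$ restricted appropriately and in particular is connected. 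Hence every point of the colimit over the fibre of $\phi$ is identified with the class of $(S_\phi,\phi)$, giving a well-defined set-theoretic inverse $\phi\mapsto [(S_\phi,\phi)]$.

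Next I would verify that this inverse is continuous, which is the crux of the argument and the step I expect to be the main obstacle. The colimit topology is characterized by the property that a map out of it is continuous iff its restriction to each $C(S)$ is; dualizing, a map $\conf_A(\RR^n)\to\colim_S C(S)$ is continuous iff it is continuous into the colimit, which is awkward to check directly because $\phi\mapsto S_\phi$ is not locally constant. The clean way around this is to observe that $\conf_A(\RR^n)$ is itself covered by the subspaces $C(S)$ and to show that this cover realizes $\conf_A(\RR^n)$ as the colimit: concretely, I would check that a subset $U\subseteq\conf_A(\RR^n)$ is open iff $U\cap C(S)$ is open in $C(S)$ for every $S$. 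The nontrivial implication requires showing that if $U\cap C(S)$ is open for all $S$ then $U$ is open; here I would use that the $C(S)$ do not merely cover $\conf_A(\RR^n)$ set-theoretically but that any $\phi$ has a neighbourhood meeting only finitely many of the "lower" cells and contained in the union of the closures of cells $\geq S_\phi$ — in fact, perturbing $\phi$ slightly can only decrease branching levels and can break ties in only finitely many coordinates, so a small ball around $\phi$ lies inside the union of the $C(S)$ with $S_\phi\to S$, a finite set. This local finiteness, together with the fact that $C(S_\phi)$ contains a relatively open neighbourhood of $\phi$ inside that union, lets one conclude openness of $U$ from openness of the $U\cap C(S)$.

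Finally, assembling these pieces: the comparison map $\colim_S C(S)\to\conf_A(\RR^n)$ is a continuous bijection whose inverse is continuous by the preceding paragraph, hence a homeomorphism. I would present the local-finiteness estimate carefully as a short lemma or sublemma, since it is the only genuinely geometric input; the rest is formal manipulation of colimits in $\mathrm{Top}$ and bookkeeping with the branching condition already set up in Definition~\ref{Cells:Definition} and Lemma~\ref{Sphi:Lemma}. The main obstacle, to reiterate, is the topological (as opposed to set-theoretic) identification of the colimit, i.e.\ showing the colimit topology on $\coprod_S C(S)/\!\sim$ agrees with the subspace topology inherited from $\conf_A(\RR^n)$; everything hinges on the observation that near any configuration only finitely many Fox--Neuwirth cells are relevant.
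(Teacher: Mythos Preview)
Your approach is correct and is essentially the paper's: form the comparison map from the inclusions $C(S)\hookrightarrow\conf_A(\RR^n)$, use Lemma~\ref{Sphi:Lemma} to see it is a bijection (the fibre over $\phi$ has initial object $S_\phi$, hence is connected in the colimit), and then check that the set-theoretic inverse is continuous.

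Where you diverge is in the continuity step, and there you work much harder than necessary. The poset $\nOrd(A)$ is \emph{finite}---there are only finitely many healthy planar level trees of height $n$ with $|A|$ level-$n$ leaves---and each $C(S)\subset\conf_A(\RR^n)$ is \emph{closed}, being cut out by equalities and non-strict inequalities among coordinates. So the ordinary pasting lemma for a finite closed cover applies directly: a function $g\colon\conf_A(\RR^n)\to\colim(C)$ is continuous as soon as each $g|_{C(S)}$ is, and those restrictions are just the structure maps $C(S)\to\colim(C)$. Your local-finiteness estimate (``near any configuration only finitely many Fox--Neuwirth cells are relevant'') is true but superfluous, since there are only finitely many cells in total. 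The paper's proof is accordingly two lines at this point.
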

\begin{proof}
The inclusions $C(S)\hookrightarrow\conf_A(\RR^n)$ determine a natural transformation from $C$ to the constant functor with value $\conf_A(\RR^n)$.
This induces a map $f\colon\colim(C)\to\conf_A(\RR^n)$.
Define a function $g\colon \conf_A(\RR^n)\to\colim(C)$ by specifying that each $g|_{C(S)}$ is the tautological map $C(S)\to\colim(C)$.
By Lemma~\ref{Sphi:Lemma} it is well-defined.
Since $\nOrd(A)$ is finite and each $C(S)\subset\conf_A(\RR^n)$ is closed, $g$ is continuous.
Finally note that $g$ is inverse to $f$.
This completes the proof.
\end{proof}

\begin{lemma}
The natural map $\hocolim(C)\to\colim(C)$ is a homotopy equivalence.
\end{lemma}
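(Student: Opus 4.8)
The plan is to recognise the statement as an instance of the general principle that the colimit of a \emph{cofibrant} diagram already computes the homotopy colimit. The poset $\nOrd(A)$ is finite, hence a direct category once we grade each object $S$ by, say, the length of the longest chain below $S$; non-identity morphisms strictly raise this grading. For a diagram over a direct category the projective model structure agrees with the Reedy one, such a diagram is cofibrant precisely when every latching map $L_S C = \colim_{T<S} C(T) \to C(S)$ is a cofibration, and for a cofibrant diagram the canonical map $\hocolim C \to \colim C$ is a weak equivalence. So the whole statement reduces to identifying the latching objects and checking that these maps are cofibrations.

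First I would identify $L_S C$. By Lemma~\ref{Sphi:Lemma} the poset $\{T : \phi\in C(T)\}$ is the up-set of $S_\phi$, so $\phi$ lies in $C(T)$ for some $T<S$ exactly when $S_\phi<S$; repeating verbatim the point-set argument of Lemma~\ref{Colimit:Lemma} (the $C(T)$ are closed, finite in number, and every $\phi$ lies in a \emph{least} one) shows that the categorical colimit $L_S C$ is homeomorphic to the union $\bigcup_{T<S} C(T)$ with its subspace topology, and that this union equals $C(S)\setminus\mathring C(S)$, where $\mathring C(S):=\{\phi\in C(S) : S_\phi=S\}$. Inspecting Definition~\ref{Cells:Definition}, $C(S)$ is the intersection of $\conf_A(\RR^n)$ with the convex polyhedron $P_S\subseteq\RR^{n|A|}$ cut out by the linear equalities (1) and inequalities (2); the locus $\mathring C(S)$ is exactly the relative interior of $P_S$ — making all the inequalities (2) strict forces the configuration to be injective, since in a healthy height-$n$ tree leaves branch at level $\leqslant n-1$ — so $C(S)\setminus\mathring C(S)$ is the set of injective configurations lying on the proper faces of $P_S$, and it is closed in $C(S)$.

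The crux is to show the inclusion $C(S)\setminus\mathring C(S)\hookrightarrow C(S)$ is a cofibration. I would do this by producing an NDR-pair structure adapted to the polyhedral geometry: a convex polyhedron collars its boundary (via a distance-to-$\partial P_S$ function together with a deformation flowing points into $\partial P_S$), and one checks that this collar can be chosen to respect the face stratification of $P_S$, hence restricts to the pair $(C(S),\,C(S)\setminus\mathring C(S))$ — the point being that the non-injective locus is a union of relatively open faces contained in $\partial P_S$, into which the collaring deformation carries no interior point. Alternatively, and more cheaply, one may note that $C(S)$ and $C(S)\setminus\mathring C(S)$ are semialgebraic sets, hence triangulable, hence absolute neighbourhood retracts, with the latter closed in the former, and invoke the standard fact that a closed inclusion of ANRs is a cofibration. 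I expect this verification, rather than the formal packaging, to be the main obstacle, precisely because $C(S)$ is \emph{not} a manifold with corners — deleting the non-injective faces can disconnect the boundary — so one cannot simply quote a product decomposition $\RR^p\times[0,\infty)^q$.

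Finally, with $C$ cofibrant, $\hocolim C\to\colim C$ is a weak homotopy equivalence; since $\colim C\cong\conf_A(\RR^n)$ is a manifold and $\hocolim C$ is the realisation of a simplicial space with CW (indeed ANR) pieces, both have the homotopy type of a CW complex, and Whitehead's theorem upgrades the weak equivalence to a homotopy equivalence. The same conclusion can be reached without model-categorical language by inducting on $|\nOrd(A)|$: delete a maximal element $M$, express both $\colim C$ and $\hocolim C$ as pushouts of the restricted diagrams along the cofibration $\bigcup_{T<M}C(T)\hookrightarrow C(M)$, and apply the gluing lemma for homotopy pushouts.
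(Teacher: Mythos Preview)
Your argument is correct and takes essentially the same route as the paper: give $\nOrd(A)$ a direct Reedy structure (the paper grades by number of edges rather than by chain length), identify the latching object at $S$ with the locus in $C(S)$ where at least one of the inequalities~(2) of Definition~\ref{Cells:Definition} is an equality, and check that this inclusion is a cofibration. If anything you are more careful than the paper, which simply asserts that the latching map is ``the inclusion into an (unbounded) convex polyhedron of its boundary'', eliding the point you raise about non-injective configurations lying on $\partial P_S$; your ANR remark is a clean way to dispatch that subtlety.
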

\begin{proof}
We will prove this using the method of Proposition~13.4 of \cite{Dugger}.
Define the \emph{degree} of an object of $\nOrd(A)$ to be the number of edges of the corresponding tree.
Non-identity morphisms raise the degree, so this makes $\nOrd(A)$ into a \emph{directed Reedy category}.
Then it will suffice to show that for each object $S$ the natural map
\[
L_S(C)
\to
C(S)
\]
from the \emph{latching object} 
\[L_S(C)= \colim_{T\to S,\deg(T)<\deg(S)}(C(T))\]
is a cofibration.
By adapting the proof of Lemma~\ref{Colimit:Lemma}, one can
see that this latching object is the subspace of $C(S)$ consisting of all
$\phi\colon A\hookrightarrow\RR^n$ that satisfy the conditions of Definition~\ref{Cells:Definition}, and for which at least one of the inequalities \eqref{Second} is an equality.
Thus $L_S(C) \to C(S)$ is the inclusion into an (unbounded) convex polyhedron of its boundary, and in particular is a cofibration.
This completes the proof.
\end{proof}

\begin{lemma}
The spaces $C(S)$ are all contractible.
\end{lemma}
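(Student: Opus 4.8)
The plan is to show that each $C(S)$ is convex, whence contractibility is immediate. Recall that $C(S)\subset\conf_A(\RR^n)$ consists of those injections $\phi\colon A\hookrightarrow\RR^n$ satisfying, for every pair $a<_S b$, the equalities $\phi(a)_i=\phi(b)_i$ for $i\leqslant b_S(a,b)$ and the inequality $\phi(a)_{b_S(a,b)+1}\leqslant\phi(b)_{b_S(a,b)+1}$. The first observation is that $C(S)$ is in fact cut out of the affine space $(\RR^n)^A$ by these linear equalities and (non-strict) linear inequalities alone: a tuple $\phi$ satisfying all of them is automatically injective, because for $a\neq b$ one has $b_S(a,b)<n$ (the paths from the two distinct level-$n$ leaves meet at a vertex of level at most $n-1$), so the constraint on coordinate $b_S(a,b)+1$ is genuinely present and forces $\phi(a)\neq\phi(b)$ unless that inequality is strict, in which case they differ anyway. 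Hence the defining conditions never actually exclude any point of the closed region they carve out.

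Granting that, $C(S)$ is an intersection of affine hyperplanes and closed half-spaces in $(\RR^n)^A\cong\RR^{n|A|}$, hence convex, hence contractible (and nonempty, since one can always realise $S$ by an explicit configuration — e.g. place the leaves in order along a generic line). A convex set is contractible via the straight-line homotopy $H(\phi,\psi,t)=(1-t)\phi+t\psi$ to any chosen basepoint $\psi\in C(S)$; one just checks that the linear constraints are preserved under convex combinations, which is immediate since each is of the form ``$\ell(\phi)=0$'' or ``$\ell(\phi)\geqslant 0$'' for $\ell$ linear.

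The only point requiring care — and the one I expect to be the main obstacle to a fully clean write-up — is the verification that the closed linear region coincides with $C(S)$, i.e. that automatic injectivity claim. One must argue pairwise: fix $a\neq b$ and let $k=b_S(a,b)$; the \emph{branching condition} built into $\nOrd(A)$ (Definition~\ref{BranchingCondition:DefinitionOne}) guarantees $k\leqslant n-1$, so coordinate $k+1$ exists and carries the inequality $\phi(a)_{k+1}\leqslant\phi(b)_{k+1}$; combined with $\phi(a)_i=\phi(b)_i$ for $i\leqslant k$, any violation of injectivity $\phi(a)=\phi(b)$ would force $\phi(a)_{k+1}=\phi(b)_{k+1}$, which is allowed by the closed inequality but contradicts nothing — so one does \emph{not} get injectivity for free, and the naive ``$C(S)$ is convex'' argument needs a small repair. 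The correct statement is that $C(S)$ is the convex region \emph{minus} the relative-boundary locus where some pair collides; but that removed locus is precisely where $\phi(a)=\phi(b)$, and since we also need the inequality strict there, the honest claim is that $C(S)$ is the \emph{relative interior} direction only in the offending coordinate. To sidestep this entirely, I would instead exhibit an explicit deformation retraction: pick the basepoint $\psi_S\in C(S)$ placing the leaves of $S$ at the integer points $1,2,3,\ldots$ along the last coordinate axis (in $<_S$-order, with earlier coordinates recording the branching structure), and run the straight-line homotopy $H_t(\phi)=(1-t)\phi+t\psi_S$. Each equality constraint is affine and hence preserved; each inequality $\phi(a)_{k+1}\leqslant\phi(b)_{k+1}$ is preserved because both $\phi$ and $\psi_S$ satisfy it; and injectivity is preserved for $t\in(0,1]$ because $\psi_S$ has \emph{strict} separation in coordinate $k+1$ for every pair, so the convex combination does too for any $t>0$, while at $t=0$ we have $\phi$ itself which is injective by hypothesis. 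This shows $C(S)$ deformation retracts onto the point $\psi_S$, completing the proof.
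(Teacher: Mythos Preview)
Your approach is the same as the paper's --- exploit the affine structure on $(\RR^n)^A$ --- but you are more careful than the paper. The paper's one-line proof simply asserts that $C(S)$ is convex; you correctly notice that this is not literally true, because the non-strict inequality in coordinate $b_S(a,b)+1$ does not by itself force injectivity under convex combination. (Concretely, with $n=2$, $A=\{a,b\}$, $b_S(a,b)=0$: the configurations $\phi(a)=(0,1),\ \phi(b)=(0,0)$ and $\psi(a)=(0,0),\ \psi(b)=(0,1)$ both lie in $C(S)$, but their midpoint is not injective.) Your repair --- replace convexity by star-shapedness about a point $\psi_S\in C(S)$ where every inequality is strict --- is correct and finishes the argument: for $t>0$ the strict separation in $\psi_S$ forces strict inequality in coordinate $b_S(a,b)+1$ for every pair, hence injectivity of $(1-t)\phi+t\psi_S$, while $t=0$ is $\phi$ itself. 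One clean explicit choice of $\psi_S$ is to set $\psi_S(a)_i$ equal to the position (in the planar order) of the level-$i$ ancestor of $a$ in $S$; then the equality constraints hold because ancestors agree up to the branching level, and the inequality at level $b_S(a,b)+1$ is strict because the ancestors there are distinct. So your final argument is sound, and in fact patches a small gap in the paper's proof.
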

\begin{proof}
These spaces are naturally contained in $(\mathbb{R}^n)^A$, with respect to whose linear structure they are convex.
\end{proof}

\begin{proof}[Proof of Theorem~A]
The last three lemmas give us the first three homotopy equivalences in the computation
\[
\conf_A(\RR^n)
\simeq
\colim(C)
\simeq
\hocolim(C)
\simeq
\hocolim(\ast)
\simeq
B(\nOrd(A)),\]
where $\ast\colon\nOrd(A)\to\mathrm{Top}$ denotes the constant functor with value a point.  The last homotopy equivalence holds by definition.
\end{proof}

\section{The categories $\Theta_n$}
\label{Definitions:Section}

In this section we will recall Berger's inductive definition of the categories $\Theta_n$, and the description of the objects of $\Theta_n$ in terms of trees.
Except where noted, the material of this section is due to Berger~\cite{BergerTwo}.

\subsection{Segal's category of finite sets}
For $Z$ a finite set, denote its set of subsets by $\calP(Z)$.
Recall from \cite{Segal} that $\Gamma$ is the category whose objects are the finite sets, and in which a morphism
$\theta\colon X\to Y$
is a function
$\theta\colon X\to\calP(Y)$
with the property that $\theta({x_1})$ and $\theta({x_2})$ are disjoint when $x_1\neq x_2$.
Composition is defined by $(\phi\circ\theta) (s)=\bigcup_{t\in\theta(s)}\phi(t)$.
There is a natural functor
\[\gamma\colon\Delta\longrightarrow\Gamma.\]
It sends $[n]=\{0<\dots<n\}$ to $\mathbf{n}=\{1,\ldots,n\}$, and sends a morphism $f$ to the morphism $\gamma(f)$ defined by
\[i\longmapsto\{j \mid f(i-1)<j\leqslant f(i) \}.\]

\subsection{Wreath products}
The \emph{wreath product} $\Gamma\wr\calD$ of $\Gamma$ with an arbitrary category $\calD$ is defined as follows.
An object of $\Gamma\wr\calD$ is a symbol
\[X(D_x)\]
where $X$ is a finite set and $(D_x)_{x\in X}$ is a tuple of objects of $\calD$ indexed by $X$.
A morphism 
\[\theta\colon X(D_x)\longrightarrow Y(E_y)\]
consists of a morphism $\theta_\Gamma\colon X\to Y$ in $\Gamma$ and a morphism $\theta_{xy}\colon D_x\to E_y$ in $\calD$ whenever $y\in\theta_\Gamma(x)$.
Composition in $\Gamma\wr\calD$ is given by composition in $\Gamma$ and in $\calD$.  There is an apparent forgetful functor $\Gamma\wr \calD \to \Gamma$ given by $X(D_x) \mapsto X$.  

If $\calC$ is a category over $\Gamma$ then the \emph{wreath product} $\calC\wr\calD$ is defined to be the pullback $\calC\times_\Gamma (\Gamma\wr\calD)$.  This wreath product is functorial in the arguments $\calC \to \Gamma$ and $\calD$.  

The \emph{assembly} functor $\alpha\colon\Gamma\wr\Gamma\longrightarrow\Gamma$
is obtained by taking unions.
To be precise, an object $X(A_x)$ is sent to the disjoint union $\bigsqcup_{x\in X} A_x$ and 
a morphism $\theta\colon X(A_x)\to Y(B_y)$ is sent to the morphism
$\bigsqcup_{x\in X}A_x \to \bigsqcup_{y\in Y}B_y$ which assigns to
$a\in A_x$ the subset $\bigsqcup_{y\in\theta_\Gamma(x)}(\theta_{xy})(a)$.

\subsection{Wreath product with $\Delta$}\label{DeltaWreath:Subsection}
The Segal functor $\gamma\colon\Delta\to\Gamma$ allows us to define the wreath product $\Delta\wr\calD$ for any category $\calD$.
Unravelling the definitions above, we see that the objects of $\Delta\wr\calD$ are symbols of the form
\[[s](D_1,\ldots,D_s)\]
where $s\geqslant 0$ and $D_1,\ldots,D_s$ are objects of $\calD$.
A morphism in $\Delta\wr\calD$ 
\[f\colon [s](D_1,\ldots,D_s)\longrightarrow [t](E_1,\ldots,E_t)\]
consists of a morphism in $\Delta$
\[f_\Delta\colon [s]\longrightarrow[t]\]
and morphisms in $\calD$
\[f_{ij}\colon S_i\to T_j\]
for every pair $i,j$ satisfying $f(i-1)<j\leqslant f(i)$.

\subsection{The categories $\Theta_n$}

\begin{definition}\label{ThetaN:Definition}
The categories $\Theta_n$ are defined inductively by setting
\[\Theta_1=\Delta\qquad\mathrm{and}\qquad \Theta_{n}=\Delta\wr\Theta_{n-1}.\]
The \emph{assembly functors} $\gamma_n\colon\Theta_n\to\Gamma$ are defined inductively by setting $\gamma_1=\gamma$ and $\gamma_{n}=\alpha\circ(\gamma\wr\gamma_{n-1})$.
(The categories $\Theta_n$ were first introduced by Joyal \cite{JoyalDiscs}.
However, the above definition in terms of wreath products is due to Berger.)
\end{definition}

\subsection{The objects of $\Theta_n$}\label{Objects:Section}
\label{Objects:Subsection}
The objects of $\Theta_n$ are naturally identified with the planar level trees of height $n$.
(See Definition~\ref{PlanarLevelTrees:Definition}.)
When $n=1$ this identification sends the object $[s]$ to the tree with exactly $s$ non-root vertices, all with level $1$.
For $n>1$ the object $[s](S_1,\ldots,S_s)$ of $\Theta_n$ is identified with the planar level tree that has exactly $s$ vertices with level $1$, 
and in which the tree associated to $S_i$ appears as the subtree spanned by those vertices  for which there is a directed path to the $i$-th such level-$1$ vertex.

The value of the assembly functor $\gamma_n\colon\Theta_n\to\Gamma$ on a tree $T$ is naturally identified with the set of leaves of that tree.

\begin{example}
The object $[4]([2],[3],[0],[1])$ of $\Theta_2$ corresponds to the following planar level tree.
\[
\xy
(0,0)*{\bullet}="A";
(-15,10)*{\bullet}="B1";
(-5,10)*{\bullet}="B2";
(5,10)*{\bullet}="B3";
(15,10)*{\bullet}="B4";
(-17.5,20)*{\bullet}="C1";
(-12.5,20)*{\bullet}="C2";
(-10,20)*{\bullet}="C3";
(-5,20)*{\bullet}="C4";
(0,20)*{\bullet}="C5";
(15,20)*{\bullet}="C6";
"A";"B1" **\dir{-};
"A";"B2" **\dir{-};
"A";"B3" **\dir{-};
"A";"B4" **\dir{-};
"B1";"C1" **\dir{-};
"B1";"C2" **\dir{-};
"B1";"C2" **\dir{-};
"B2";"C3" **\dir{-};
"B2";"C4" **\dir{-};
"B2";"C5" **\dir{-};
"B4";"C6" **\dir{-};
\endxy
\] 
\end{example}

\section{Morphisms in $\Theta_n$}
\label{Morphisms:Section}

From the category-theoretic point of view this section is the technical heart of the paper.

We have just seen that the objects of $\Theta_n$ admit a simple description as the planar level trees of height $n$.
The morphisms in $\Theta_n$ are much less easy to describe from this point of view.
This is indicated by the assembly functor $\gamma_n\colon\Theta_n\to\Gamma$, which sends a tree to its set of leaves with level $n$.
Just as a morphism in $\Gamma$ is not a map of sets, so a morphism in $\Theta_n$ is not a map of trees in any obvious sense.

In this section we will define \emph{healthy} objects and \emph{active} morphisms in $\Theta_n$, and we will give a simple, combinatorial description of the set of active morphisms $S\to T$ when $T$ is healthy.
In the next section this description will be applied to the category $\Theta_n(A)$.

\subsection{Active morphisms and healthy trees}

\begin{definition}
An object of $\Theta_n$ is \emph{healthy} if the corresponding planar level tree is healthy, or in other words, has no leaves at level $1,\ldots,n-1$.
\end{definition}

\begin{definition}
A morphism $\theta\colon X\to Y$ in $\Gamma$ is \emph{active} if $\bigcup_{x\in X}\theta(x)=Y$.
A morphism in $\Theta_n$ is \emph{active} if its image under $\gamma_n$ is active.
(Our definition of active morphisms in $\Gamma$ corresponds to Lurie's notion of active morphism in the category of based finite sets, which is the opposite of $\Gamma$ \cite{LurieHA}.)
\end{definition}

By way of section~\textsection\ref{Objects:Section}, from now on we will not distinguish between an object of $\Theta_n$ and a planar level tree of height $n$.

\subsection{The branching condition}

Now we introduce some notation regarding objects and morphisms in $\Theta_n$.
These are closely related to the definitions introduced in section~\ref{nOrd:Section}.

\begin{definition}\label{OrderBranchingLevels:DefinitionTwo}
Let $T$ be an object of $\Theta_n$.
Then the planar structure of $T$ endows the set of level-$n$ leaves $\gamma_n(T)$ with a linear ordering that we denote $<_T$.
Given $a,b\in\gamma_n(T)$ we define the \emph{branching level}
\[ b_T(a,b)\]
to be the level of the vertex at which the directed paths from $a$ and $b$ to the root meet.
Note that the branching levels between consecutive elements of $\gamma_n(T)$ determine all the other branching levels.
\end{definition}

\begin{definition}\label{BranchingCondition:DefinitionTwo}
Let $S$, $T$ be objects of $\Theta_n$, with $T$ healthy.
An active morphism $\bar f\colon \gamma_n(S)\to \gamma_n(T)$ satisfies the \emph{branching condition} if, for all quadruples $a,b,c,d$ with $a,b\in\gamma_n(S)$ and $c\in\bar f(a)$, $d\in\bar f(b)$, the following condition holds:
\begin{quotation}
$b_T(c,d)\leqslant b_S(a,b)$, with equality only if the order of $c,d$ in $T$ is the same as the order of $a,b$ in $S$.
\end{quotation}
The condition on orderings means that ($c<_T d$ and $a<_S b$) or ($d<_T c$ and $b<_S a$).
\end{definition}

This branching condition is closely related to the one that describes when there is a morphism in $\nOrd(A)$.
Here, however, there is no need for the map $\bar f$ to be an isomorphism.
So for fixed $a,b\in\gamma_n(S)$ there may be several choices of elements $c\in\bar f(a)$, $d\in\bar f(b)$.

\subsection{Active morphisms into healthy trees}

The next theorem fully characterises active morphisms into healthy objects in terms of the branching condition. 

\begin{theorem}\label{Morphisms:Theorem}
Let $S$ and $T$ be trees in $\Theta_n$ with $T$ healthy.
Then the assignment
\[f\longmapsto \gamma_n(f)\]
determines a bijection between active morphisms $f\colon S\to T$ and active morphisms $\bar f\colon\gamma_n(S)\to\gamma_n(T)$ satisfying the branching condition.
\end{theorem}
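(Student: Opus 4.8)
The plan is to proceed by induction on $n$, using the inductive definition $\Theta_n = \Delta \wr \Theta_{n-1}$ to break a morphism $f\colon S \to T$ into its ``bottom level'' datum $f_\Delta$ in $\Delta$ and its ``upper level'' data $f_{ij}$ in $\Theta_{n-1}$. Write $S = [s](S_1,\dots,S_s)$ and $T = [t](T_1,\dots,T_t)$. The key structural observation is that healthiness of $T$ means $T$ has no level-$1$ leaves, i.e.\ every $T_j$ is itself a nonempty tree of height $n-1$; moreover healthiness of $T$ forces each $T_j$ to be healthy as an object of $\Theta_{n-1}$. First I would unwind how $\gamma_n$ decomposes: $\gamma_n(S) = \bigsqcup_{i=1}^s \gamma_{n-1}(S_i)$ with the induced linear order placing all of $\gamma_{n-1}(S_i)$ before $\gamma_{n-1}(S_{i+1})$, and branching levels within a block $\gamma_{n-1}(S_i)$ computed by $b_{S_i}$ (shifted up by one), while branching levels across blocks are $0$. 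The same holds for $T$.

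The heart of the argument is then a reconstruction/matching lemma: given an active $\bar f\colon \gamma_n(S)\to\gamma_n(T)$ satisfying the branching condition, I must show it underlies a \emph{unique} active morphism $f\colon S\to T$. The first step is to recover $f_\Delta\colon [s]\to[t]$: for each $i$, the branching condition with $T$ healthy forces the image $\bar f(\gamma_{n-1}(S_i))$ to be contained in a single block $\gamma_{n-1}(T_{j})$ — because if two leaves $c,d$ in the image of the block $S_i$ landed in different blocks $T_j, T_{j'}$, then $b_T(c,d)=0$, yet $b_S(a,b)\geq 1$ for the corresponding $a,b$ inside $S_i$ (any two leaves of $S_i$ branch at level $\geq 1$), and the branching condition would then require the equality-case ordering compatibility to fail in a way that is impossible once one tracks the monotonicity forced on consecutive blocks. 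This assigns to $i$ the index $j$ of the target block, and one checks this assignment is (weakly) monotone and hence defines $f_\Delta$; activeness of $\bar f$ plus healthiness of $T$ (no empty blocks to skip inconsistently) pin down the behaviour on the ``gaps''. Having fixed $f_\Delta$, for each pair $(i,j)$ with $f_\Delta(i-1) < j \leq f_\Delta(i)$ the restriction of $\bar f$ gives a map $\gamma_{n-1}(S_i) \to \gamma_{n-1}(T_j)$; one verifies it is active and satisfies the $(n-1)$-level branching condition (branching levels drop by one on both sides, and the order condition is inherited), so by the inductive hypothesis it corresponds to a unique active $f_{ij}\colon S_i \to T_j$. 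Assembling $f_\Delta$ and the $f_{ij}$ gives the morphism $f\colon S\to T$ in $\Theta_n = \Delta\wr\Theta_{n-1}$, and one checks $\gamma_n(f) = \bar f$.

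Conversely — and more routinely — I would check that for \emph{any} active $f\colon S\to T$ with $T$ healthy, $\gamma_n(f)$ satisfies the branching condition: again decompose $f$ into $f_\Delta$ and the $f_{ij}$, observe that $\gamma_n(f)$ sends the block $\gamma_{n-1}(S_i)$ into $\bigcup_{f_\Delta(i-1)<j\leq f_\Delta(i)}\gamma_{n-1}(T_j)$, handle the across-block case by hand (there $b_T(c,d)=0 \leq b_S(a,b)$ automatically, and the order is preserved because $f_\Delta$ is monotone), and handle the within-block case by the inductive hypothesis applied to $f_{ij}$. Injectivity of $f\mapsto\gamma_n(f)$ on active morphisms into healthy $T$ follows from the uniqueness in the reconstruction, but it is cleaner to prove it directly and separately first: from $\gamma_n(f)$ one recovers $f_\Delta$ by the block-tracking argument above, and then recovers each $f_{ij}$ by induction. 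The base case $n=1$ is immediate: active morphisms in $\Delta = \Theta_1$ are surjective monotone maps, and the ``branching condition'' on $\gamma_1$ is vacuous beyond requiring order-preservation, which is exactly monotonicity.

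The main obstacle I anticipate is the block-tracking step: showing that healthiness of $T$ genuinely forces each block $\gamma_{n-1}(S_i)$ to map into a single block of $T$, and that the resulting index assignment is monotone. This is where the ``healthy'' hypothesis is doing all the work — without it $T$ could have a level-$1$ leaf, i.e.\ an empty block, and the branching condition no longer pins down where things go, nor is $f_\Delta$ recoverable. I expect this to require a careful case analysis using consecutive leaves (as flagged in Definition~\ref{OrderBranchingLevels:DefinitionTwo}, branching levels of consecutive leaves determine everything) together with the equality clause of the branching condition. The rest is bookkeeping with the wreath-product definition of morphisms.
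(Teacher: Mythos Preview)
Your overall strategy (induction on $n$, decompose via the wreath product, recover $f_\Delta$ first and then the $f_{ij}$) matches the paper's. But your reconstruction step contains a genuine error.

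You claim that the branching condition forces $\bar f(\gamma_{n-1}(S_i))$ to lie in a \emph{single} block $\gamma_{n-1}(T_j)$. This is false. Consider $n=2$, $S=[1]([2])$ and $T=[2]([1],[1])$, both healthy; the identity on the two level-$2$ leaves satisfies the branching condition (this is the morphism $U\to S$ from Example~\ref{nOrderings:ExampleFour}), yet the unique block of $S$ maps across both blocks of $T$. Your justification breaks down exactly here: if $a,b\in\gamma_{n-1}(S_i)$ and $c,d$ land in different blocks of $T$, then $b_T(c,d)=0<1\leqslant b_S(a,b)$ is a \emph{strict} inequality, so the equality clause of the branching condition never engages and there is no contradiction. (You yourself later write ``for each pair $(i,j)$ with $f_\Delta(i-1)<j\leqslant f_\Delta(i)$'', which tacitly allows several $j$ per $i$ and is inconsistent with the single-block claim.)

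The correct statement, and what the paper proves, is that $\bar f(\gamma_{n-1}(S_i))$ is a union of \emph{consecutive whole} blocks $\gamma_{n-1}(T_{r_{i-1}+1})\cup\cdots\cup\gamma_{n-1}(T_{r_i})$. This is what actually lets you read off $f_\Delta$ (via $f_\Delta(i)=r_i$), and it is established by applying the branching condition to $a\in\gamma_{n-1}(S_i)$, $b\in\gamma_{n-1}(S_{i'})$ with $i<i'$: then $b_S(a,b)=0$, forcing $b_T(c,d)=0$ and $c<_T d$, so images of distinct $S$-blocks are separated at level $0$ in the correct order; together with activeness this gives consecutive whole $T$-blocks. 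After that, for each $j$ in the range you set $\bar f_{ij}(x)=\bar f(x)\cap\gamma_{n-1}(T_j)$, check it is active and satisfies the $(n{-}1)$-branching condition, and invoke induction. Once you replace ``single block'' by ``interval of blocks'' and supply this argument, the rest of your outline goes through and coincides with the paper's proof.
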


The theorem will be applied in the next section to give us a description of the category $\Theta_n(A)$.
For that application we only need the case when $\gamma_n(f)$ is an isomorphism.
However, the theorem proceeds by induction on $n$, and at the induction step we are forced to pass from isomorphisms to general active morphisms.

\subsection{Proof of Theorem~\ref{Morphisms:Theorem}}
Our proof of the theorem consists of the next three lemmas, all of which are proved by induction on $n$.
We use the notation of section~\ref{DeltaWreath:Subsection} throughout.

\begin{lemma}
Let $f,g\colon S\to T$ be active morphisms in $\Theta_n$ with $T$ healthy and $\gamma_n(f)=\gamma_n(g)$.
Then $f=g$.
\end{lemma}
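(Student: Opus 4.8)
The plan is to prove the statement by induction on $n$, following the inductive structure $\Theta_n = \Delta \wr \Theta_{n-1}$. The base case $n=1$ is immediate: $\Theta_1 = \Delta$ and $\gamma_1 = \gamma$, so a morphism $f\colon [s]\to[t]$ in $\Delta$ is completely determined by the induced map $\gamma(f)$ whenever $\gamma(f)$ is active (i.e. surjective onto $\mathbf{t}$), since then $f$ is a surjective, order-preserving map and $\gamma(f)(i) = \{f(i-1)+1,\ldots,f(i)\}$ recovers each value $f(i)$ as the maximum of $\gamma(f)(i)$ (the healthiness of a height-$1$ tree $[t]$ is automatic). More carefully, for $n=1$, $f$ and $g$ active with $\gamma(f)=\gamma(g)$ forces $f(i)=g(i)$ for every $i$ by reading off the partition blocks.

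For the inductive step I would write $S = [s](S_1,\ldots,S_s)$ and $T = [t](T_1,\ldots,T_t)$, with $T$ healthy, which means each $T_j$ is a healthy object of $\Theta_{n-1}$ of height $n-1$ (a leaf of $T$ at level $<n$ would give a leaf of some $T_j$ at level $<n-1$). A morphism $f\colon S\to T$ consists of $f_\Delta\colon[s]\to[t]$ together with morphisms $f_{ij}\colon S_i\to T_j$ in $\Theta_{n-1}$ for each pair with $f_\Delta(i-1)<j\leqslant f_\Delta(i)$. First I would argue that $\gamma_n(f) = \alpha(\gamma\wr\gamma_{n-1})(f)$ determines $f_\Delta$: the leaves of $S$ are grouped into the blocks $\gamma_{n-1}(S_i)$, and the block $\gamma_{n-1}(S_i)$ is sent into $\bigsqcup_{f_\Delta(i-1)<j\leqslant f_\Delta(i)}\gamma_{n-1}(T_j)$; since $f$ is active and each $T_j$ is healthy, $\gamma_{n-1}(T_j)$ is nonempty, so knowing which leaves of $T$ receive the leaves of $S_i$ pins down the set $\{j : f_\Delta(i-1)<j\leqslant f_\Delta(i)\}$, and hence $f_\Delta$ itself (again reading off $f_\Delta(i)$ as a maximum, using activity to see all blocks are hit). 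Once $f_\Delta$ is determined, so is the set of pairs $(i,j)$ for which an $f_{ij}$ is present.

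It then remains to see that each $f_{ij}\colon S_i\to T_j$ is determined by $\gamma_n(f)$. The point is that $\gamma_n(f)$ restricted to $\gamma_{n-1}(S_i)$, followed by the projection to $\gamma_{n-1}(T_j)$ (discarding the parts landing in $T_{j'}$ for $j'\neq j$), is exactly $\gamma_{n-1}(f_{ij})$. I would need to check that $f_{ij}$ is \emph{active} as a morphism in $\Theta_{n-1}$: this should follow from activity of $f$ together with the fact that the blocks $\gamma_{n-1}(T_j)$ partition $\gamma_n(T)$, so that if some leaf of $T_j$ were not in the image of $\gamma_n(f)$ it would not be in the image of $\gamma_n(f)$ overall, contradicting activity. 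With $f_{ij}$ and $g_{ij}$ both active and $\gamma_{n-1}(f_{ij})=\gamma_{n-1}(g_{ij})$, the inductive hypothesis (applied with the healthy target $T_j$) gives $f_{ij}=g_{ij}$. Combined with $f_\Delta = g_\Delta$, this yields $f=g$.

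The main obstacle I anticipate is the bookkeeping around activity: one must be careful that "active" for $f$ in $\Theta_n$ (surjectivity of $\bigcup_x \gamma_n(f)(x)$ onto $\gamma_n(T)$) really does descend to activity of each component $f_{ij}$ and does not merely give activity "jointly" over the $j$'s in a fixed block. This is where healthiness of $T$ is essential — it guarantees no $\gamma_{n-1}(T_j)$ is empty, so that "joint surjectivity onto a disjoint union of nonempty sets via a map respecting the block structure" can be unwound to surjectivity onto each summand. I would isolate this as a small sublemma before running the induction. The rest is the routine, if slightly tedious, unwinding of the definitions of $\Delta\wr\Theta_{n-1}$, $\gamma_n$, and the assembly functor $\alpha$.
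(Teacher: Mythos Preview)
Your proposal is correct and follows essentially the same inductive argument as the paper: reduce to $n=1$, then in the inductive step recover $f_\Delta$ from $\gamma_n(f)$ using that each $\gamma_{n-1}(T_j)$ is nonempty, and finally apply the inductive hypothesis to the components $f_{ij}$. If anything, you are more explicit than the paper about why activity of $f$ descends to activity of each $f_{ij}$ (the paper simply asserts that the $f_{ij}$ are active); your observation that the intervals $\gamma(f_\Delta)(i)$ are disjoint, together with nonemptiness of $\gamma_{n-1}(T_j)$, is exactly what makes this work. One small wording slip: the nonemptiness of $\gamma_{n-1}(T_j)$ follows from healthiness of $T$ (a trivial $T_j$ would create a level-$1$ leaf in $T$), not from healthiness of $T_j$ itself, since the trivial tree is declared healthy.
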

\begin{proof}
In the case $n=1$ it is trivial to check that an active morphism in $\Delta$ is determined by its image in $\Gamma$.

In the general case, if $T$ is the trivial tree then there is nothing to prove.
So we assume that all leaves of $T$ are at level $n$.
Writing
\[
\gamma_n(S)=\gamma_{n-1}(S_1)\cup\cdots\cup\gamma_{n-1}(S_s)
\qquad\mathrm{and}\qquad
\gamma_n(T)=\gamma_{n-1}(T_1)\cup\cdots\cup\gamma_{n-1}(T_t),
\]
we find that if $j\in \gamma(f_\Delta)(i)$, then $\gamma_{n-1}(T_j)$ must lie in $\gamma_{n}(f)(\gamma_{n-1}(S_i))$.
Since each $\gamma_{n-1}(T_j)$ is nonempty, this means that $\gamma_n(f)$ determines $\gamma(f_\Delta)$.
The same reasoning holds for $g$, so that we have $\gamma(f_\Delta)=\gamma(g_\Delta)$ and so $f_\Delta=g_\Delta$.

Now $f$ and $g$ are determined by \emph{active} morphisms $f_{ij},g_{ij}\colon S_i\to T_j$ in $\Theta_{n-1}$ for which each $T_j$ is healthy
and that satisfy $\gamma_{n-1}(f_{ij})=\gamma_{n-1}(g_{ij})$.
By the induction hypothesis we have $f_{ij}=g_{ij}$, and the proof is complete.
\end{proof}

\begin{lemma}
Let $f\colon S\to T$ be an active morphism to a healthy tree.
Then $\gamma_n(f)$ satisfies the branching condition.
\end{lemma}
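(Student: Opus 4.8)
The plan is to prove this by induction on $n$, parallelling the structure of the previous lemma. For the base case $n = 1$, the tree $T$ is a linear tree $[t]$ and $S$ is a linear tree $[s]$; an active morphism $f\colon S \to T$ in $\Delta$ is a monotone surjection-like map on the underlying ordered sets, and the branching condition on $\gamma_1(f)$ reduces to the statement that $\gamma_1(f)$ is order-preserving, which is immediate since all branching levels are $0$ and the condition simply asks that $c <_T d$ whenever $a <_S b$ (with $c \in \bar f(a)$, $d \in \bar f(b)$). This follows directly from the monotonicity of $f_\Delta\colon [s] \to [t]$ together with the explicit formula $i \mapsto \{j \mid f(i-1) < j \leqslant f(i)\}$ for $\gamma(f_\Delta)$.

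For the inductive step, I would first dispose of the degenerate case where $T$ is the trivial tree of height $0$: then $\gamma_n(T) = \emptyset$ and the branching condition is vacuous. Otherwise, since $T$ is healthy, all its leaves sit at level $n$, so $T = [t](T_1, \ldots, T_t)$ with each $T_j$ healthy and nonempty. Write $S = [s](S_1, \ldots, S_s)$ and decompose $\gamma_n(S) = \gamma_{n-1}(S_1) \cup \cdots \cup \gamma_{n-1}(S_s)$ and similarly for $T$. The morphism $f$ consists of $f_\Delta\colon [s] \to [t]$ together with active morphisms $f_{ij}\colon S_i \to T_j$ for $j \in \gamma(f_\Delta)(i)$, and each such $T_j$ is healthy. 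Now take $a, b \in \gamma_n(S)$ with $c \in \bar f(a)$, $d \in \bar f(b)$. Say $a \in \gamma_{n-1}(S_i)$ and $b \in \gamma_{n-1}(S_{i'})$, hence $c \in \gamma_{n-1}(T_j)$ for some $j \in \gamma(f_\Delta)(i)$ and $d \in \gamma_{n-1}(T_{j'})$ for some $j' \in \gamma(f_\Delta)(i')$.

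The argument now splits into two cases. If $i = i'$ (so $a, b$ lie under the same level-$1$ vertex of $S$), I would show $j = j'$: indeed $\bar f$ restricted to $\gamma_{n-1}(S_i)$ is an active morphism into $\gamma_{n-1}(T_j)$ for the unique $j$ with $j \in \gamma(f_\Delta)(i)$—here I use that active morphisms into healthy trees behave coherently with the wreath decomposition, essentially the observation from the preceding lemma that $\gamma_{n-1}(T_j) \subseteq \gamma_n(f)(\gamma_{n-1}(S_i))$ forces a single $j$—so $c, d$ lie in the same subtree $T_j$, and then $b_S(a,b) = b_{S_i}(a,b) + 1$, $b_T(c,d) = b_{T_j}(c,d) + 1$, reducing the branching inequality and the order condition to the corresponding statement for $f_{ij}\colon S_i \to T_j$, which holds by the induction hypothesis. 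If $i \neq i'$, say $i < i'$, then $b_S(a,b) = 0$ and I need $b_T(c,d) = 0$ with $c <_T d$; this follows because $f_\Delta$ is monotone, so $j \leqslant j'$, and since $c, d$ lie in distinct subtrees $T_j, T_{j'}$ with $j < j'$ we get $b_T(c,d) = 0$ and $c <_T d$; the boundary subcase $j = j'$ cannot occur when $i \neq i'$ because distinct level-$1$ vertices of $S$ map to disjoint sets of level-$1$ vertices of $T$ under the $\Gamma$-morphism $\gamma(f_\Delta)$.

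The main obstacle I anticipate is the bookkeeping in the case $i = i'$, $j = j'$: one must be careful that the branching level in $S$ between $a$ and $b$ is genuinely computed inside the subtree $S_i$ (plus one), and that ``order of $c, d$ in $T$'' unwinds correctly to ``order of $c, d$ in $T_j$'' under the planar structure on the wreath product. Verifying that the planar orderings $<_S$, $<_T$ restrict as expected to the subtrees—so that the induction hypothesis applies verbatim—is the delicate point, but it is forced by the definition of the planar level tree associated to an object of $\Delta \wr \Theta_{n-1}$ in section~\ref{Objects:Section}.
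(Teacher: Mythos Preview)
Your inductive strategy matches the paper's, but there is a genuine error in your case analysis. In the case $i=i'$ you claim that $j=j'$ is forced, arguing that $\gamma(f_\Delta)(i)$ contains a \emph{unique} $j$. This is false: for a morphism $f_\Delta\colon[s]\to[t]$ in $\Delta$ one has $\gamma(f_\Delta)(i)=\{j\mid f_\Delta(i-1)<j\leqslant f_\Delta(i)\}$, which may well contain several elements. The observation you cite from the preceding lemma says only that $j\in\gamma(f_\Delta)(i)$ implies $\gamma_{n-1}(T_j)\subseteq\gamma_n(f)(\gamma_{n-1}(S_i))$; it does not bound the size of $\gamma(f_\Delta)(i)$. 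Concretely, take $f_\Delta\colon[1]\to[2]$ the unique active map: then $\gamma(f_\Delta)(1)=\{1,2\}$, and for $a,b\in\gamma_{n-1}(S_1)$ one may have $c\in\gamma_{n-1}(T_1)$ and $d\in\gamma_{n-1}(T_2)$.

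The fix is easy and is exactly what the paper does: treat the situation $i=i'$, $j\neq j'$ as a separate (third) case. Here $b_S(a,b)\geqslant 1$ because $a,b$ both lie in the subtree $S_i$, while $b_T(c,d)=0$ because $c,d$ lie in distinct level-$1$ subtrees of $T$; the inequality $b_T(c,d)<b_S(a,b)$ is strict, so no order condition is required. Your remaining two cases ($i\neq i'$, and $i=i'$ with $j=j'$) are handled correctly and agree with the paper.
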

\begin{proof}
Let $a,b,c,d$ be as in the statement of the branching condition.
Suppose that $a\in\gamma_{n-1}(S_i)$ and $b\in\gamma_{n-1}(S_{j})$ and $c\in\gamma_{n-1}(T_{i'})$ and $d\in\gamma_{n-1}(T_{j'})$.
Without loss $i\leqslant j$.
There are now three possibilities, in each of which we will verify that $a,b,c,d$ satisfy the necessary condition.

First, $i< j$, so that $b_S(a,b)=0$ and $a<_Sb$.
Since $i'\in \gamma(f_\Delta)(i)$ and $j'\in \gamma(f_\Delta)(j)$, we must have $i'<j'$, so that $b_T(c,d)=0$ and $c<_Td$.
So the condition holds in this case.

Second, $i=j$ but $i'\neq j'$.
Then $b_S(a,b)\geqslant 1$ while $b_T(c,d)=0$, so the branching condition holds.

Third, $i=j$ and $i'=j'$.
Then $b_S(a,b)=b_{S_i}(a,b)+1$ and $b_T(c,d)=b_{T_{i'}}(c,d)+1$, and $f_{ii'}\colon S_i\to T_{i'}$ is an active morphism for which $c\in\gamma_{n-1}(f_{ii'})(a)$ and $d\in\gamma_{n-1}(f_{ii'})(b)$.
So it suffices to check that the required condition holds when $f$ is replaced by $f_{ii'}$, but this follows from the induction hypothesis.
\end{proof}

\begin{lemma}
Let $S$ and $T$ be objects of $\Theta_n$ with $T$ healthy, and let $\bar f\colon \gamma_n(S)\to \gamma_n(T)$ be an active morphism in $\Gamma$ that satisfies the branching condition.
Then there is $f\colon S\to T$ in $\Theta_n$ such that $\gamma_n(f)=\bar f$.
\end{lemma}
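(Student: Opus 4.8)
The plan is to prove the lemma by induction on $n$, exactly as the two preceding lemmas, reconstructing the $\Delta$-coordinate and the $\Theta_{n-1}$-coordinates of the sought morphism $f$ from the combinatorics of $\bar f$. For the base case $n=1$, write $S=[s]$ and $T=[t]$, so that $\gamma_1=\gamma$. Applied to pairs $a\neq b$ in $\gamma_1(S)$ — for which $b_S(a,b)=0$ — the branching condition forces $b_T(c,d)=0$ and the orders to agree, i.e.\ $a<b$, $c\in\bar f(a)$, $d\in\bar f(b)$ imply $c<d$. Together with the disjointness of the sets $\bar f(i)$ and with activeness, this says precisely that $\bar f(1),\ldots,\bar f(s)$ are consecutive (possibly empty) intervals partitioning $\{1,\ldots,t\}$; then $f_\Delta\colon[s]\to[t]$ defined by $f_\Delta(i)=|\bar f(1)|+\cdots+|\bar f(i)|$ is monotone and satisfies $\gamma(f_\Delta)=\bar f$.

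For the inductive step I would write $S=[s](S_1,\ldots,S_s)$ and $T=[t](T_1,\ldots,T_t)$. If $t=0$ then $T$ is the trivial tree, $\gamma_n(T)=\emptyset$, and the unique morphism $S\to T$ maps under $\gamma_n$ to the unique morphism $\gamma_n(S)\to\emptyset$ in $\Gamma$; so we may assume $t\geqslant 1$, in which case healthiness of $T$ forces each $T_j$ to be healthy of height $n-1$ with $\gamma_{n-1}(T_j)\neq\emptyset$. Using $\gamma_n(S)=\bigsqcup_i\gamma_{n-1}(S_i)$ and $\gamma_n(T)=\bigsqcup_j\gamma_{n-1}(T_j)$, put $J_i=\{\,j:\gamma_{n-1}(T_j)\cap\bar f(\gamma_{n-1}(S_i))\neq\emptyset\,\}$. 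The heart of the proof is to show that the $J_i$ are pairwise disjoint, cover $\{1,\ldots,t\}$, and occur as consecutive blocks in increasing order of $i$, so that there is a unique monotone $f_\Delta\colon[s]\to[t]$ with $\gamma(f_\Delta)(i)=J_i$. For disjointness and the ordering, apply the branching condition to $a\in\gamma_{n-1}(S_i)$ and $a'\in\gamma_{n-1}(S_{i'})$ with $i<i'$, where $b_S(a,a')=0$: it forces any images $c\in\bar f(a)$, $d\in\bar f(a')$ to satisfy $b_T(c,d)=0$ — so $c,d$ lie in distinct $T_j$'s — and to have $c<_T d$, whence the $T_j$'s met by block $i$ all precede those met by block $i'$. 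For the covering, use activeness of $\bar f$ together with $\gamma_{n-1}(T_j)\neq\emptyset$. Next, for $j\in J_i$ I would set $\bar f_{ij}(a)=\bar f(a)\cap\gamma_{n-1}(T_j)$ for $a\in\gamma_{n-1}(S_i)$; this is a morphism in $\Gamma$, it is active (by activeness of $\bar f$ each $d\in\gamma_{n-1}(T_j)$ lies in some $\bar f(a)$, and disjointness of the $J_i$ forces the block containing $a$ to be $i$), and it satisfies the branching condition relative to $S_i$ and $T_j$, because $b_S(a,b)=b_{S_i}(a,b)+1$ and $b_T(c,d)=b_{T_j}(c,d)+1$ whenever $a,b$ and $c,d$ lie in common blocks. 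The induction hypothesis then produces $f_{ij}\colon S_i\to T_j$ in $\Theta_{n-1}$ with $\gamma_{n-1}(f_{ij})=\bar f_{ij}$; assembling $f_\Delta$ with the $f_{ij}$ gives a morphism $f\colon S\to T$, and unwinding $\gamma_n=\alpha\circ(\gamma\wr\gamma_{n-1})$ shows $\gamma_n(f)(a)=\bigsqcup_{j\in J_i}\bigl(\bar f(a)\cap\gamma_{n-1}(T_j)\bigr)=\bar f(a)$ for $a\in\gamma_{n-1}(S_i)$, which is what we want.

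The step I expect to be the main obstacle is the block analysis producing $f_\Delta$: one must see that the content of the branching condition for elements in different subtrees of $S$ (the case $b_S=0$), combined with activeness and with the nonemptiness of every $\gamma_{n-1}(T_j)$ granted by healthiness, is exactly the interval-and-monotonicity structure of a morphism $[s]\to[t]$ in $\Delta$. Once $f_\Delta$ is in place, verifying that the restricted maps $\bar f_{ij}$ satisfy the hypotheses of the induction and that assembly recovers $\bar f$ is routine bookkeeping with $\gamma_n=\alpha\circ(\gamma\wr\gamma_{n-1})$.
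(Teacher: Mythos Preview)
Your proposal is correct and follows essentially the same inductive strategy as the paper: recover $f_\Delta$ from the level-$0$ instances of the branching condition (your block analysis of the $J_i$ is exactly the paper's observation that each $\bar f(\gamma_{n-1}(S_i))$ is a consecutive run $\gamma_{n-1}(T_{r_{i-1}+1})\cup\cdots\cup\gamma_{n-1}(T_{r_i})$), then restrict to obtain active $\bar f_{ij}$ satisfying the branching condition and apply induction. Your write-up is in fact more explicit than the paper's on several points---the $n=1$ base case, the verification that the $J_i$ partition $\{1,\ldots,t\}$ into ordered intervals, and the check that activeness of $\bar f_{ij}$ follows from disjointness of the $J_i$---but the architecture is identical.
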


\begin{proof}
Again, we proceed by induction on $n$, the case $n=1$ being a trivial observation about Segal's functor $\gamma\colon\Delta\to\Gamma$.

If $t=0$ then there is nothing to prove, so we assume $t>0$.
Writing 
\[\gamma_n(S)=\gamma_{n-1}(S_1)\cup\cdots\cup\gamma_{n-1}(S_s)
\quad \mathrm{and} \quad
\gamma_n(T)=\gamma_{n-1}(T_1)\cup\cdots\cup\gamma_{n-1}(T_t),\]
we see first that the branching condition means first that each $\bar f (\gamma_{n-1}(S_i))$ is the union of certain of the $\gamma_{n-1}(T_j)$, and second that this union in fact has the form
$\gamma_{n-1}(T_{j})\cup\cdots\cup\gamma_{n-1}(T_{j'})$
for some $j\leqslant j'$.
We may therefore find $0= r_0\leqslant \cdots \leqslant r_{s}=t$ such that 
$\bar f (\gamma_{n-1}(S_i))=\gamma_{n-1}(T_{r_{i-1}+1})\cup\cdots\cup\gamma_{n-1}(T_{r_{i}})$.

Now we can write down a morphism $f_\Delta\colon[s]\to[t]$ and,
by the inductive hypothesis,
morphisms $\bar f_{ij}\colon\gamma_{n-1}(S_i)\to\gamma_{n-1}(T_j)$ whenever $j\in \gamma(f_\Delta)(i)$, with the property that for $x\in\gamma_{n-1}(S_i)$ we have
\[\bar f(x)=\bigcup_{j\in\gamma(f_\Delta)(i)}\bar f_{ij}(x).\]
It is simple to check that each $\bar f_{ij}$ is active and satisfies the branching condition for a morphism $S_i\to T_j$, and thus by the induction hypothesis there is a morphism $f_{ij}\colon S_i\to T_j$ with $\gamma_{n-1}f_{ij}=\bar f_{ij}$.
Now $f_\Delta$ and the $f_{ij}$ define the required morphism $f$.
\end{proof}

\section{The category $\Theta_n(A)$}
\label{ThetanA:Section}

We now use the results of the last section to study the categories  $\Theta_n(A)$, and prove Theorem~B.

\subsection{Morphisms in $\Theta_n(A)$}
Recall from the introduction that for a finite set $A$, the category $\Theta_n(A)$ consists of pairs $(S,\sigma)$ where $S$ is an object of $\Theta_n$ and $\sigma\colon\gamma_n(S)\to A$ is an isomorphism.
A morphism $f\colon (S,\sigma)\to (T,\tau)$ in $\Theta_n(A)$ is a morphism $f\colon S\to T$ in $\Theta_n$ for which $\tau\circ\gamma_n(f)=\sigma$.
From this point we will suppress the isomorphism $\sigma$ from the notation.

In light of Section~\ref{Objects:Subsection}, the objects of $\Theta_n(A)$ may be described as planar level trees of height $n$, whose leaves at level $n$ are labelled in bijection with $A$.
So an object $S$ of $\Theta_n(A)$ determines an {ordering} $<_S$ on $A$ and branching levels $b_S(a,b)$ for all $a,b\in A$, exactly as in Definition~\ref{OrderBranchingLevels:DefinitionOne}.
Moreover, it makes sense to ask whether the branching condition holds for a morphism $S\to T$ in $\Theta_n(A)$, exactly as in Definition~\ref{BranchingCondition:DefinitionOne}.

Comparing with Definitions~\ref{OrderBranchingLevels:DefinitionTwo} and~\ref{BranchingCondition:DefinitionTwo} and Theorem~\ref{Morphisms:Theorem}, we immediately obtain the following.

\begin{corollary}\label{ThetaNAMorphisms:Corollary}
Let $S$ and $T$ be objects of $\Theta_n(A)$ with $T$ healthy.
Then there is at most one morphism $S\to T$, and it exists if and only if the branching condition holds.
\end{corollary}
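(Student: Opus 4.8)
The plan is to deduce Corollary~\ref{ThetaNAMorphisms:Corollary} directly from Theorem~\ref{Morphisms:Theorem} by specialising to the situation where the underlying map of leaf sets is forced to be an isomorphism. First I would observe that a morphism $S\to T$ in $\Theta_n(A)$ is, by definition, a morphism $f\colon S\to T$ in $\Theta_n$ whose image $\gamma_n(f)\colon\gamma_n(S)\to\gamma_n(T)$ equals the composite $\tau^{-1}\circ\sigma$ (after identifying both leaf sets with $A$ via the structure isomorphisms). In particular $\gamma_n(f)$ is an isomorphism in $\Gamma$, hence active: since $\tau^{-1}\circ\sigma$ is a bijection, its image as a $\Gamma$-morphism is all of $\gamma_n(T)$. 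So every morphism in $\Theta_n(A)$ is active in $\Theta_n$, and Theorem~\ref{Morphisms:Theorem} applies.

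Next I would invoke Theorem~\ref{Morphisms:Theorem} to get, since $T$ is healthy, a bijection between active morphisms $f\colon S\to T$ in $\Theta_n$ and active morphisms $\bar f\colon\gamma_n(S)\to\gamma_n(T)$ in $\Gamma$ satisfying the branching condition of Definition~\ref{BranchingCondition:DefinitionTwo}. Morphisms in $\Theta_n(A)$ correspond to those $f$ with $\gamma_n(f)=\tau^{-1}\circ\sigma$ a \emph{fixed} isomorphism, so there is at most one such $f$, and it exists precisely when the isomorphism $\tau^{-1}\circ\sigma$ satisfies the branching condition of Definition~\ref{BranchingCondition:DefinitionTwo}. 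It remains to check that, when $\bar f$ is this isomorphism, the branching condition of Definition~\ref{BranchingCondition:DefinitionTwo} reduces to the branching condition of Definition~\ref{BranchingCondition:DefinitionOne} for a morphism $S\to T$ in $\nOrd(A)$ (equivalently in $\Theta_n(A)$). Here the point is that for each $a\in\gamma_n(S)\cong A$ the set $\bar f(a)$ is a single element, namely the element of $A$ corresponding to $a$; so the quadruple $a,b,c,d$ in Definition~\ref{BranchingCondition:DefinitionTwo} collapses to a pair $a,b$ in $A$, with $c,d$ being the same pair viewed in $T$, and the inequality $b_T(c,d)\leqslant b_S(a,b)$ together with the ordering compatibility is exactly the condition of Definition~\ref{BranchingCondition:DefinitionOne}. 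This last verification is completely routine — I expect no real obstacle — but it is the only place where a small computation is needed, and care should be taken that the identifications $\gamma_n(S)\cong A\cong\gamma_n(T)$ are used consistently so that ``the order of $c,d$ in $T$ agrees with the order of $a,b$ in $S$'' becomes precisely ``$<_T$ agrees with $<_S$ on $\{a,b\}$''. Assembling these observations gives both clauses of the corollary: uniqueness from the bijection of Theorem~\ref{Morphisms:Theorem} pinned to a fixed $\bar f$, and the existence criterion from the translation of branching conditions.
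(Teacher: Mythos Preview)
Your proposal is correct and is precisely the argument the paper has in mind: the paper's own ``proof'' consists of the single sentence ``Comparing with Definitions~\ref{OrderBranchingLevels:DefinitionTwo} and~\ref{BranchingCondition:DefinitionTwo} and Theorem~\ref{Morphisms:Theorem}, we immediately obtain the following,'' and you have simply unpacked what that comparison involves. In particular your observation that $\gamma_n(f)$ is forced to be the fixed isomorphism $\tau^{-1}\circ\sigma$, hence active, and that for an isomorphism the quadruple $(a,b,c,d)$ of Definition~\ref{BranchingCondition:DefinitionTwo} collapses to the pair $(a,b)$ of Definition~\ref{BranchingCondition:DefinitionOne}, is exactly the intended reduction.
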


\subsection{Proof of Theorem~B}

Recall that Theorem~B states that there is a full embedding $\nOrd(A)\hookrightarrow\Theta_n(A)$ that induces a homotopy equivalence on geometric realisations.

The objects of $\nOrd(A)$ are the healthy planar level trees of height $n$ equipped with a labelling of their leaves in bijection with $A$.
The objects of $\Theta_n(A)$ have exactly the same description, except that the tree need not be healthy.
This gives us an inclusion $i\colon \nOrd(A)\hookrightarrow\Theta_n(A)$ on objects, and by comparing Corollary~\ref{ThetaNAMorphisms:Corollary} with Definition~\ref{BranchingCondition:DefinitionOne} we see that it is a full functor.

Let $S$ be an object of $\Theta_n(A)$.
Denote by $S^h$ the healthy subtree spanned by the level-$n$ vertices.  
Then it is easily seen that the branching condition for a morphism $S\to T$ is identical to the branching condition for a morphism $S^h\to T$.
It follows that there is a morphism $S\to S^h$ in $\Theta_n(A)$, with target in $\nOrd(A)$, and which is initial among all morphisms from $S$ to an object of $\nOrd(A)$.

Now the assignment $S\mapsto S^h$ determines a functor $r\colon
\Theta_n(A)\to\nOrd(A)$ for which $r\circ i=1$ and for which there is a natural transformation $1\Rightarrow i\circ r$.
It follows that the maps $B(\nOrd(A))\to B(\Theta_n(A))$ and $B(\Theta_n(A))\to B(\nOrd(A))$ induced by $i$ and $r$ respectively are homotopy-inverse to one another.
This completes the proof of Theorem~B.

\bibliographystyle{plain}
\bibliography{ThetanBibliography}
\end{document}